\theoremstyle{plain}
\newtheorem{theorem}{Theorem}[section]
\newtheorem{lemma}[theorem]{Lemma}
\newtheorem{corollary}[theorem]{Corollary}
\theoremstyle{definition}
\newtheorem{remark}[theorem]{Remark}
\newtheorem{example}[theorem]{Example}
\DeclareMathOperator{\Aut}{Aut}
\DeclareMathOperator{\Aff}{Aff}
\DeclareMathOperator{\characteristic}{char}
\DeclareMathOperator{\Frob}{Frob}
\DeclareMathOperator{\id}{id}
\DeclareMathOperator{\GL}{GL}
\DeclareMathOperator{\Gal}{Gal}
\DeclareMathOperator{\Split}{Split}
\DeclareMathOperator{\rank}{rank}
\DeclareMathOperator{\ord}{ord}
\newcommand{\FF}{{\mathbf{F}}}
\newcommand{\OO}{\mathcal{O}}
\newcommand{\Que}{{\mathbf{Q}}}
\newcommand{\Zee}{{\mathbf{Z}}}
\newcommand{\Zhat}{{\widehat {\Zee}}}
\newcommand{\gothp}{{\mathfrak{p}}}
\newcommand{\gothq}{{\mathfrak{q}}}
\newcommand{\tto}{\longrightarrow}
\newcommand{\iso}{\cong}
\newcommand{\mapright}[1]{\mathop{\longrightarrow}\limits^{#1}}
\newcommand{\tor}{\textup{tor}}
\renewcommand{\hat}{\widehat}
\newcommand\lowtilde{\lower0.7ex\hbox{\textasciitilde}}
\newcommand{\mybar}[1]{
  \mathchoice
  {#1\llap{$\overline{\phantom{\displaystyle\rm#1}}$}}
  {#1\llap{$\overline{\phantom{\textstyle\rm#1}}$}}
  {#1\llap{$\overline{\phantom{\scriptstyle\rm#1}}$}}
  {#1\llap{$\overline{\phantom{\scriptscriptstyle\rm#1}}$}}
}  
\renewcommand{\bar}{\mybar}
\begin{document}

\title[Locally imprimitive points on elliptic curves]
{Locally imprimitive points on elliptic curves}

\author[Jones]{Nathan Jones} %
\author[Pappalardi]{Francesco Pappalardi}
\author[Stevenhagen]{Peter Stevenhagen}
\address{Department of Mathematics, Statistics and Computer Science,
         University of Illinois at Chicago, 
         851 S Morgan St, 322 SEO,
         Chicago, IL 60607,
         USA}
        
\address{Dipartimento di Matematica,
         Universit\`a Roma Tre,
         Largo S. L. Murialdo 1,
         I--00146 Rome,
         Italy}
         
\address{Mathematisch Instituut,
         Universiteit Leiden, 
         Postbus 9512, 
         2300 RA Leiden, The Netherlands}
         
\email{ncjones@uic.edu, pappa@mat.uniroma3.it, psh@math.leidenuniv.nl}

\date{\today}
\keywords{Elliptic curves, primitive points, Galois representation}

\subjclass[2010]{Primary 11G05; Secondary 11F80}

\begin{abstract}
Under GRH, any element in the multiplicative group of a number field $K$ 
that is globally primitive (i.e., not a perfect power in $K^*$) is 
a primitive root modulo a set of primes of $K$ of positive density.

For elliptic curves $E/K$ that are known to have
infinitely many primes~$\gothp$ of cyclic reduction, possibly under GRH,
a globally primitive point $P\in E(K)$ may fail to generate
any of the point groups $E(k_\gothp)$.
We describe this phenomenon in terms of an associated 
Galois representation $\rho_{E/K, P}:G_K\to\GL_3(\Zhat)$,
and use it to construct non-trivial examples
of global points on elliptic curves that are locally imprimitive.
\end{abstract}

\maketitle

\section{Introduction}
\label{S:intro}

\noindent
Under the Generalized Riemann Hypothesis (GRH), 
every non-zero rational number that is not $-1$ or a square is a primitive root modulo infinitely many primes~$p$.
This was proved in 1967 by Hooley \cite{Hooley},
forty years after Artin had stated it as a conjecture.
For general number fields $K$, there are counterexamples 
to the direct analogue of this statement, 
i.e., number fields $K$ with non-torsion elements $x\in K^*$ 
that are not $\ell$-th powers for any prime $\ell$ 
for which $K$ contains a primitive $\ell$-th root of unity, but that are nevertheless a primitive root in only 
finitely many residue class fields $k_\gothp$.
The direct analogue of Artin's conjecture does however hold
for $x\in K^*$ that are \emph{globally primitive}, i.e., 
not in ${K^*}^\ell$ for any prime $\ell$.
\begin{theorem}
\label{thm:mult-prim}
Let $K$ be a number field and $x\in K^*$ globally primitive, and assume GRH.
Then $x$ is a primitive root modulo $\gothp$ for a set primes $\gothp$ of $K$ of positive density.
\end{theorem}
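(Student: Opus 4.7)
The plan is to adapt Hooley's 1967 argument for Artin's conjecture under GRH to the number field setting, along the lines of Cooke--Weinberger. The first step is the standard Kummer--cyclotomic reformulation: for a prime $\gothp$ of $K$ outside a finite bad set, the reduction $x \bmod \gothp$ generates $k_\gothp^*$ precisely when, for every prime $\ell$, $\gothp$ fails to split completely in $L_\ell := K(\zeta_\ell, x^{1/\ell})$. Writing $\pi_n(N)$ for the number of primes $\gothp$ of $K$ with $N\gothp \le N$ that split completely in $L_n := K(\zeta_n, x^{1/n})$, Möbius inversion yields
\[
  \#\{\gothp : N\gothp \le N,\ x \text{ primitive mod }\gothp\}
   \ =\ \sum_{n\ge 1}\mu(n)\,\pi_n(N)\ +\ O(1).
\]

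I would then invoke the effective Chebotarev density theorem of Lagarias--Odlyzko under GRH, giving
\[
  \pi_n(N)\ =\ \frac{\mathrm{Li}(N)}{[L_n:K]}\ +\ O\bigl(\sqrt N\,\log(N^{[K:\Que]}\cdot |\mathrm{disc}(L_n)|)\bigr),
\]
and split the Möbius sum into Hooley's three ranges at $\xi_1 \asymp \tfrac{1}{6}\log N$ and $\xi_2 \asymp \sqrt N / \log^2 N$. The small range $n \le \xi_1$ produces the candidate main term $\delta(x)\cdot \mathrm{Li}(N)$ with
\[
  \delta(x)\ =\ \sum_{n\ge 1}\frac{\mu(n)}{[L_n:K]}.
\]
The intermediate range $\xi_1 < n \le \xi_2$ contributes $o(N/\log N)$ via GRH--Chebotarev, combined with Minkowski-type bounds on the root discriminants of $L_n$. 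The tail $n > \xi_2$ is controlled using the observation that a prime counted in $\pi_n(N)$ must satisfy $N\gothp \equiv 1 \pmod n$, so $n$ is forced to be small relative to $N\gothp$, and a Brun--Titchmarsh-style sieve bound absorbs the remainder.

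The principal obstacle is the strict positivity of $\delta(x)$. By Kummer theory one has $[L_n:K] = n\,[K(\zeta_n):K]/m_n$, where $m_n$ measures the $n$-th power relations between $x$ and elements of $K(\zeta_n)^*$. As the counterexamples alluded to in the introduction show, weakening ``globally primitive'' to ``not an $\ell$-th power in $K$ for any $\ell$ with $\zeta_\ell \in K$'' allows local power relations in larger cyclotomic extensions to conspire and force $\delta(x)=0$. To exclude this under the full hypothesis, one groups squarefree $n$ by their radical and reorganises the sum to factor $\delta(x)$ as a convergent Euler-type product of local factors $1 - 1/[L_\ell : K] + \text{(higher corrections)}$, each strictly in $(0,1)$, times a finite rational correction encoding the subgroup of $K^*/K_{\mathrm{tors}}^*$ generated by $x$ and the roots of unity of $K$. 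Carrying out this Kummer-theoretic bookkeeping with enough uniformity in $\ell$ to guarantee that no factor vanishes, using $x \notin K^{*\ell}$ for \emph{every} prime $\ell$, is the heart of the argument.
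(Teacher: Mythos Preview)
Your analytic outline (Hooley ranges, effective Chebotarev under GRH, Brun--Titchmarsh tail) is correct but unnecessary: the paper simply invokes Lenstra \cite{Lenstra}, which already establishes under GRH that the density of $S_{K,x}$ exists and equals $\delta_{K,x}=\sum_n \mu(n)/[K_n:K]$. The entire content of the theorem is therefore the strict positivity of $\delta_{K,x}$, which you correctly flag as ``the heart of the argument'' --- and then do not prove. Your proposal to factor $\delta(x)$ into an Euler product times ``a finite rational correction'' and dispose of the latter by ``Kummer-theoretic bookkeeping'' is not an argument: that finite correction is exactly the sum $\sum_{m\mid N}\mu(m)/[K_m:K]$ in \eqref{deltaKx-factored}, and Example~\ref{ex:counterexample} shows it \emph{can} vanish under hypotheses only slightly weaker than global primitivity. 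You give no mechanism that rules this out.

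The paper's positivity argument is short and supplies the missing idea. Global primitivity means $X^\ell-x$ is irreducible over $K$ for every prime $\ell$, so $\ell$ divides $[K_\ell:K]$. If $\ell$ is the \emph{largest} prime dividing a squarefree $N$, then $[K_{N/\ell}:K]$ is coprime to $\ell$ (each $[K_{\ell'}:K]$ with $\ell'<\ell$ divides $\ell'(\ell'-1)$), whence $K_\ell\not\subset K_{N/\ell}$ and $\ell\mid[K_N:K_{N/\ell}]$. Now induct on the number of prime factors of $N$ to produce an element of $\Gal(K_N/K)$ with nontrivial restriction to every $K_\ell$ with $\ell\mid N$: given such an element in $\Gal(K_{N/\ell}/K)$, its lifts to $\Gal(K_N/K)$ form a coset of $\Gal(K_N/K_{N/\ell})$, and since $K_\ell\not\subset K_{N/\ell}$ these lifts cannot all act trivially on $K_\ell$. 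The existence of such an element is equivalent, by inclusion--exclusion, to the nonvanishing of the finite sum, hence of $\delta_{K,x}$. This inductive degree argument, exploiting $\ell\mid[K_\ell:K]$ at \emph{every} prime, is precisely the ``uniformity in $\ell$'' you gesture at but never make concrete.
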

\noindent
Informally stated: \emph{globally primitive} elements in $K^*$ are \emph{locally primitive} in $k_\gothp^*$ at infinitely many places $\gothp$,
with an element being `primitive' in $K^*$ or~$k_\gothp^*$ meaning that it generates a subgroup that 
is \emph{not} contained in a strictly larger cyclic subgroup of $K^*$ or~$k_\gothp^*$.
Section \ref{S:multiplicative primitivity} provides
a proof of Theorem \ref{thm:mult-prim}, and counterexamples to stronger statements.

Now replace the multiplicative group $K^*$ by the point group
$E(K)$ of an elliptic curve $E/K$, and the unit group $k_\gothp^*$ of the residue class field at $\gothp$ by the
point group $E(k_\gothp)$ at the primes of good reduction $\gothp$ of $E$.
Then we can add two natural questions to Artin's to obtain the following three problems:
for a number field $K$, determine the infinitude 
(or natural density) of the set of primes $\gothp$ in $K$ for which 
\begin{enumerate}
    \item[I.] (Artin) a given element $x\in K^*$ is a primitive root modulo $\gothp$, i.e., $k_\gothp^*=\langle\bar x\rangle$;
    \item [II.](Serre \cite{Serre3}) a given elliptic curve $E/K$ has cyclic reduction modulo $\gothp$;
    \item [III.](Lang-Trotter \cite{Lang-Trotter}) a given point $P\in E(K)$ generates the group $E(k_\gothp)$ at $\gothp$.
\end{enumerate}
We will refer to the cases I, II, and III as the 
\emph{multiplicative primitive root},
the \emph{cyclic reduction} and the 
\emph{elliptic primitive root} case, and denote the 
corresponding sets of primes by $S_{K, x}$, $S_{E/K}$, and 
$S_{E/K, P}$, respectively.
By definition, the finitely many primes of bad reduction for which we have 
$\ord_\gothp(x)\ne 0$ (in case I) or $\ord_\gothp(\Delta_E)\ne0$ (in case II and III, 
with $\Delta_E$ the discriminant of $E$) are excluded from these sets.
Note that we have an obvious inclusion 
$S_{E/K, P}\subset S_{E/K}$.

In each of the three cases, we have a group theoretical statement that
can be checked prime-wise at the primes $\ell$ dividing
the order of the groups $k_\gothp^*$ and $E(k_\gothp)$ involved. 
The statement `at $\ell$' has a translation in terms of the splitting behavior 
of $\gothp$ in a finite Galois extension $K\subset K_\ell$ 
that we describe in Section \ref{S:splittingconditions}.
Combining the requirements for all $\ell$ leads to (conjectural) density statements based on the
Chebotarev density theorem \cite{Stev-Lenstra}.

Imposing infinitely many splitting conditions, one for each prime $\ell$, 
leads to analytic problems with error terms that have been mastered 
under assumption of GRH in Cases I \cites{Hooley, Cooke-Weinberger, Lenstra} and II 
\cites{Serre3, Gupta-Murty-cycl, Campagna-Stevenhagen}, 
and that remain open in Case III.
In each case, there is a conjectural density 
$\delta_{K, x}$, $\delta_{E/K}$, or 
$\delta_{E/K, P}$ that is an upper bound for the upper density of the set of primes $\gothp$.
Proving unconditionally that the set is infinite in case 
the conjectural density is positive is an open problem.
If it is zero, we can however prove unconditionally that the corresponding set of primes is finite.

This paper focuses on the vanishing of $\delta_{E/K, P}$ 
in the elliptic primitive root case, which is much more subtle 
than the vanishing in the cases I and II. 
We call a global point $P\in E(K)$ \emph{locally imprimitive} 
if it is a generator of the local point group $E(k_\gothp)$ for 
only finitely many primes~$\gothp$ of $K$.
Our analysis will yield `elliptic counterexamples' $(E/K, P)$ to 
Theorem \ref{thm:mult-prim}, i.e., 
elliptic curves $E/K$ for which the cyclic reduction density 
$\delta_{E/K}$ is positive but for which
a globally primitive point $P\in E(K)$ is locally imprimitive.

Just as in the multiplicative primitive root and the cyclic reduction cases I and~II, obstructions to
local primitivity of a point $P\in E(K)$ become visible in an associated Galois representation.
In the elliptic primitive root case, the absolute Galois group  $G_K=\Gal(\overline K/K)$
of the number field $K$ acts on the subgroup of
$E(\overline K)$ consisting of the points $Q\in E(\overline K)$ satisfying  
$kQ\in\langle P\rangle\subset E(K)$ for some $k\in\Zee_{\ge1}$.
This yields a representation
$
\rho_{E/K, P}: G_K\to\GL_3(\Zhat).
$
Just as in the two other 
cases~\cite{Campagna-Stevenhagen},
it suffices to consider the residual representation
\begin{equation}
\label{residualrep}
\bar\rho_{E, P}: G_K\to\GL_3(\Zhat)\to\GL_3(\Zee/N\Zee)
\end{equation}
modulo a suitable squarefree integer $N$ that is divisible by all `critical primes'.

Unlike the cases I and II,
case III already allows non-trivial obstructions to local primitivity at prime level $N=\ell$.
In the multiplicative case I, the index $[k_\gothp^*:\langle\overline x\rangle]$ can only
be divisible by $\ell$ for almost all $\gothp$ for the `trivial reason' that
$K$ contains an $\ell$-th root of unity and $x$ is an $\ell$-th power in~$K^*$.
In the cyclic reduction case II, the group $E(k_\gothp)$ can only have a non-cyclic $\ell$-part 
for almost all~$\gothp$ for the `trivial reason' that the full $\ell$-torsion
of $E/K$ is $K$-rational.
In the elliptic primitive root case III however, there is a third reason why 
a point $P\in E(K)$ can be \emph{locally $\ell$-imprimitive}, 
meaning that $\ell$ divides the index
$[E(k_\gothp):\langle \overline P\rangle]$ for all but finitely many $\gothp$.
It is a less obvious one, and it was numerically discovered in 2015 in the case $\ell=2$ 
by Meleleo \cite{Meleleo}, who restricted himself to the basic case $K=\Que$.
\begin{theorem}
\label{thm:lneverprim}
Let $P\in E(K)$ be a non-torsion point of an elliptic curve $E$ defined over a number field $K$, 
and $\ell$ a prime number.
Then $P$ is locally $\ell$-imprimitive
if and only if at least at least one of the following conditions holds
\begin{enumerate}
\item[A.]
$E(K)$ contains a torsion point of order $\ell$ and  $P\in \ell\cdot E(K)$;
\item[B.]
$E$ has complete rational $\ell$-torsion over $K$;
\item[C.]
there exists an isogeny $\phi: E'\to E$ defined over $K$ with
kernel generated by a torsion point of $E'(K)$ of order $\ell$ and $P\in\phi[E'(K)]$.
\end{enumerate}
\end{theorem}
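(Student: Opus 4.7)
The plan is to work through the residual Galois representation $\bar\rho_{E,P}:G_K\to\GL_3(\FF_\ell)$ of \eqref{residualrep}, writing each image element as a block upper-triangular matrix whose top-left $2\times 2$ block $M\in\GL_2(\FF_\ell)$ acts on $E[\ell]$, whose top-right column $v\in\FF_\ell^2$ records how the $\ell$-division points of $P$ are twisted, and whose bottom-right entry is $1$.  The first thing to establish is the local criterion: for a prime $\gothp$ of good reduction coprime to $\ell$, one has $\ell\mid[E(k_\gothp):\langle\bar P\rangle]$ if and only if the Frobenius $(M,v)$ satisfies $M=I$, or $M$ has $1$ as an eigenvalue and $v$ lies in the image of $M-I$.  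The forward direction (A, B, or C implies local $\ell$-imprimitivity) then reduces to routine verification using this criterion: under A, the reduced torsion point $\bar T$ of order $\ell$ together with $\bar P\in\ell\cdot E(k_\gothp)$ prevents $\bar P$ from generating the $\ell$-part of $E(k_\gothp)$; under B, the group $E(k_\gothp)[\ell]$ is non-cyclic; under C, the image $\bar\phi(E'(k_\gothp))\subseteq E(k_\gothp)$ is a proper subgroup of index exactly $\ell$ (using that isogenous elliptic curves have equal point counts over finite fields) containing $\bar P=\bar\phi(\bar{P'})$.

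For the reverse direction, Chebotarev applied to $\bar\rho_{E,P}$ converts local $\ell$-imprimitivity into the assertion that every element of $G:=\bar\rho_{E,P}(G_K)$ is ``bad'' in the sense above.  This yields two structural consequences: (i) every $M$ in the image $H:=\bar\rho_{E,\ell}(G_K)$ has $1$ as an eigenvalue, and (ii) for each $(M,v)\in G$ with $M\ne I$, the cocycle value $v$ lies in the image of $M-I$.  The heart of the argument is the following group-theoretic lemma: \emph{if every element of $H\subseteq\GL_2(\FF_\ell)$ has $1$ as an eigenvalue, then $H=\{I\}$, or $H$ stabilizes a line on which it acts via a character $\chi$ equal to $\mathbf{1}$ or to $\det|_H$}.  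To prove this, set $H_1:=H\cap\SL_2(\FF_\ell)$; its elements have trace $2$ and are therefore unipotent, and a subgroup of $\SL_2(\FF_\ell)$ consisting only of identity and unipotents must lie in a single Sylow $\ell$-subgroup, whose fixed line $L$ is then $H$-stable by normality of $H_1$ in $H$.  The two eigenvalues of $M\in H$ on $E[\ell]$ become $\chi(M)$ and $\det(M)\chi(M)^{-1}$, so (i) forces $H=\ker\chi\cup\ker(\det\cdot\chi^{-1})$; since a group is never the union of two proper subgroups, one kernel must equal $H$, giving $\chi=\mathbf{1}$ or $\chi=\det$.  In the degenerate case $H_1=\{I\}$, the map $\det|_H$ is injective, so $H$ is cyclic, and the $1$-eigenspace of a generator is fixed by all of $H$.

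Translating each case of the lemma back to the statement completes the proof.  Trivial $H$ yields $E[\ell]\subseteq E(K)$, which is condition~B.  The case $\chi=\mathbf{1}$ provides a rational torsion point $T\in E(K)$ of order $\ell$, and combined with (ii) one extracts a $K$-rational $\ell$-division point of $P$, giving condition~A.  The case $\chi=\det$ corresponds via Weil-pairing duality to an isogeny $\phi:E'\to E$ whose kernel is generated by a $K$-rational point $T'\in E'(K)$ of order $\ell$, and condition~(ii) dually forces $P\in\phi(E'(K))$, giving condition~C.  I expect this last translation---deducing a global divisibility of $P$ from the pointwise cocycle hypothesis~(ii)---to be the main obstacle: it requires choosing a basis of $E[\ell]$ adapted to the stabilized line $L$, computing the relevant $H^1$, and ruling out exotic non-coboundary cocycles satisfying~(ii) via careful analysis of the extension structure $0\to L\to E[\ell]\to E[\ell]/L\to 0$.
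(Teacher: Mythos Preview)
Your local criterion and the forward direction are correct and agree with the paper. For the converse the paper takes a different and cleaner route: rather than first analysing the $2$-dimensional representation $H\subset\GL(U_\ell)$ and then handling the cocycle $v$, it proves a single linear-algebra lemma (Lemma~\ref{lemma:groupactions}) valid in every dimension~$n$: if each $\sigma\in G$ pointwise fixes a hyperplane of $V$, then either $\dim V^G\ge n-1$ or $\dim V_G\ge n-1$. Applied with $n=3$ to $V=V_\ell$, the two alternatives translate directly into ``A or B'' (a $G$-fixed plane in $V_\ell$) and ``B or C'' (all the subspaces $(\sigma-1)V_\ell$ lie in a single line $T_\ell\subset U_\ell$), with no separate cocycle bookkeeping. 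Your Sylow argument is a perfectly valid alternative proof of the $n=2$ case of this lemma, but applying it only to $U_\ell$ discards information that you then have to recover.

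Your route has a genuine gap in the $\chi=1$ branch. You assert that (ii) then yields a $K$-rational $\ell$-division point of $P$ and hence condition~A; but take
\[
G=\left\{\begin{pmatrix}1&a&b\\0&1&0\\0&0&1\end{pmatrix}:a,b\in\FF_\ell\right\}.
\]
Here $H$ fixes $T_1$, so $\chi=1$, and every element is bad in your sense; yet the translation $(I,T_1)\in G$ moves every vector in $V_\ell\setminus U_\ell$, so $P\notin\ell\cdot E(K)$ and A fails. What holds instead is~C, via the isogeny dual to $E\to E/\langle T_1\rangle$. The ``exotic non-coboundary cocycles satisfying~(ii)'' that you hope to rule out thus genuinely occur in the $\chi=1$ branch, and when they do the conclusion is C rather than~A. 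By contrast, the $\chi=\det$ branch you flag as the main obstacle is routine: there $\im(M-I)=L$ for every $M\ne I$, and for $(I,v_0)\in G$ one applies~(ii) to the product $(M,v)(I,v_0)=(M,v+Mv_0)$ to force $Mv_0\in L$ and hence $v_0\in L$; thus $\dim(V_\ell)_G\ge2$ and C follows at once. Repairing the $\chi=1$ branch amounts to reproving the $n=3$ case of Lemma~\ref{lemma:groupactions}; it is simpler to prove that lemma directly and apply it to~$V_\ell$.
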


\noindent
Condition A in Theorem \ref{thm:lneverprim} 
is the analogue of the trivial condition from case~I:
if $E(K)$ has non-trivial $\ell$-torsion,
then almost all $E(k_\gothp)$ are groups of order divisible 
by~$\ell$, and for these $\gothp$
a point $P\in \ell\cdot E(K)$ will have its reduction in the subgroup 
$\ell E(k_\gothp) \subset E(k_\gothp)$ of index divisible by $\ell$.
Condition B bears no relation to $P$, and is well known
from case II:
non-cyclicity of the $\ell$-part of 
the global torsion subgroup $E(K)^\tor$
implies non-cyclicity of the $\ell$-part of $E(k_\gothp)$
at almost all $\gothp$.
At these $\gothp$, no single point $P$ can generate it.

Condition C has no analogue in the multiplicative primitive root case,
and it is a truly different condition 
as it includes cases in which we have both $P\notin \ell\cdot E(K)$ and $E[\ell](K)=0$.
If it holds, the dual isogeny $\hat\phi: E\to E'$
maps $P$ into $\ell E'(K)$, and the pair
$(E,P)$ is $\ell$-isogenous to the curve-point pair $(E',\hat\phi(P))$ satisfying Condition~A.
We call a locally 
$\ell$-imprimitive non-torsion point $P\in E(K)$ 
\emph{non-trivial} if Condition~C is satisfied, but \emph{not} Condition A or B.

\medskip\noindent 
By Theorem \ref{thm:lneverprim}, non-trivial locally 2-imprimitive points $P\in E(K)$
can only exist for $E/K$ having a single $K$-rational point of order 2, i.e., 
a 2-torsion subgroup of order $\#E(K)[2]\ne 1, 4$.
Examples of such points are actually surprisingly abundant.
\begin{theorem}
\label{thm:2neverprim}
Let $E/K$ be any elliptic curve with $\#E(K)[2]=2$. 
Then there are infinitely many quadratic twists of $E$ over $K$
that have a non-trivial locally $2$-imprimitive point.
\end{theorem}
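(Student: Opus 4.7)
The plan is an explicit construction on quadratic twists, using the $2$-isogeny from $E$. Normalize so that $E\colon y^2 = x(x^2 + Ax + B)$ with unique $K$-rational $2$-torsion $T = (0, 0)$ and $q(x) = x^2 + Ax + B$ irreducible over $K$. Let $\phi\colon E' \to E$ be the $K$-isogeny dual to $E \to E/\langle T\rangle$, so that $E'\colon y^2 = xq'(x)$ with $q'(x) = x^2 - 2Ax + (A^2 - 4B)$ and $\ker\phi = \langle T'\rangle$ for $T' = (0,0)\in E'(K)$. For each $X_0\in K$ with $d := X_0 q'(X_0)\ne 0$, the equation $dy^2 = xq'(x)$ is an affine model of the quadratic twist $(E')^d$, of which $Q := (X_0, 1)$ is a $K$-rational point. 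Setting $P := \phi^d(Q)\in E^d(K)$, where $\phi^d\colon (E')^d\to E^d$ is the twisted isogeny, the pair $(E^d, P)$ satisfies Condition~C of Theorem~\ref{thm:lneverprim} by construction.

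To ensure $P$ is non-trivially locally $2$-imprimitive, I would verify that Conditions A and B both fail. Condition B never holds, since the quadratic factor of $E^d$ is $x^2 + dAx + d^2 B$ with discriminant $d^2(A^2-4B)$, and $A^2-4B$ is non-square as $q$ is irreducible; hence $\#E^d(K)[2] = 2$ for every twist. For Condition A, one uses the $\phi^d$-descent injection
\[
\delta'_d\colon (E')^d(K)/\hat\phi^d(E^d(K)) \hookrightarrow K^*/K^{*2},
\]
under which $P\in 2E^d(K)$ iff $\delta'_d(Q)\in\{1,\delta'_d(T'_d)\}$. Passing to the standard Weierstrass model of $(E')^d$ via $(x, y)\mapsto (dx, d^2 y)$, one computes $\delta'_d(Q) \equiv q'(X_0)$ and $\delta'_d(T'_d)\equiv A^2-4B$ in $K^*/K^{*2}$; hence $P\notin 2E^d(K)$ precisely when both $q'(X_0)$ and $q'(X_0)(A^2-4B)$ are non-square in $K^*$.

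The remaining task is to exhibit infinitely many $X_0\in K$ satisfying these non-square conditions, with $Q$ non-torsion on $(E')^d$, and such that the values $d = X_0 q'(X_0)$ realize infinitely many classes in $K^*/K^{*2}$ (giving infinitely many non-isomorphic twists $E^d$). The square-class constraints cut out only a thin set of bad $X_0$ by Hilbert's irreducibility theorem applied to the $(\Zee/2)^3$-extension $K(T)(\sqrt{T}, \sqrt{q'(T)}, \sqrt{A^2-4B})/K(T)$, whose generators are multiplicatively independent in $K(T)^*/K(T)^{*2}$; non-torsion of $Q$ on a cofinite subset follows from a standard specialization argument on the isotrivial elliptic surface $X_0 \mapsto (E')^{X_0 q'(X_0)}$, whose constant base change is $E' \times E' \to E'$ via projection. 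The main obstacle is the final density step: for each class $c\in K^*/K^{*2}$, the $X_0\in K$ with $d(X_0)\in c\cdot K^{*2}$ are essentially the $x$-coordinates of $K$-rational points on the quadratic twist $(E')^c\colon Y^2 = cXq'(X)$ of $E'$, and form a thin subset of $K$ by Mordell--Weil. Since $K$ is not a finite union of thin sets, the values of $d$ as $X_0$ varies cannot be confined to finitely many square classes. Applying Theorem~\ref{thm:lneverprim} to each resulting pair $(E^d, P)$ then yields the required infinite family of non-trivial locally $2$-imprimitive points on quadratic twists of $E$.
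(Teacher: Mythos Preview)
Your proof is correct and rests on the same underlying construction as the paper's: produce a point on a quadratic twist of $E$ as the image, under the twisted $2$-isogeny, of a $K$-rational point on the corresponding twist of $E'$. The paper parametrizes by the slope $\lambda$ of a line through a fixed point on the conic $Y^2=1+aD+bD^2$ (forcing the image point to have $x$-coordinate $1$), whereas you parametrize directly by the $x$-coordinate $X_0$ of a point on $(E')^d$; these are two charts on the same family. Where the paper then \emph{explicitly} computes the $2$-division field $K(\tfrac12 P_\lambda)=K(\sqrt{d},\sqrt{\lambda^2-b})$ and reads off the Galois group, you instead verify Conditions A and B of Theorem~\ref{thm:lneverprim} via the $\phi^d$-descent map and Hilbert irreducibility. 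Your route has the advantage of making the ``infinitely many distinct twists'' conclusion explicit through the thin-set argument in your final paragraph, a point the paper leaves to the reader. One small correction: the fact that $\{X_0 : d(X_0)\in cK^{*2}\}$ is thin has nothing to do with Mordell--Weil; it is thin of type~II simply because it is the image of $(E')^c(K)$ under the degree-$2$ map given by the $x$-coordinate. Also, the generator $\sqrt{T}$ in your $(\Zee/2\Zee)^3$-extension is not actually used---the constraints from Condition~A only involve $\sqrt{q'(T)}$ and $\sqrt{A^2-4B}$---but its inclusion is harmless.
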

\noindent
The proof of this Theorem, which we give in Section \ref{S:Locally 2-imprimitive points},
uses the fact that it is easy to create non-torsion points on twists of $E$, and exploits the particularly 
explicit description of $K$-rational 2-isogenies.

For primes $\ell>2$, it is harder to obtain families of elliptic curves 
with points of infinite order that are
locally $\ell$-imprimitive in non-trivial ways.
In Section \ref{S:Locally 3-imprimitive points} we provide an approach
in the case $\ell=3$.
It can be extended to higher values of $\ell$ 
(Section \ref{S:Further examples}), but the examples 
rapidly become unwieldy.

\medskip\noindent 
Non-torsion points that are locally imprimitive but not locally $\ell$-imprimitive for any single prime $\ell$ do exist, but they are not easily found.
They involve restrictions arising from reductions of $\rho_{E/K,P}$ of composite level caused by
non-trivial \emph{entanglement} between the fields~$K_\ell$.
In the context of the easier cyclic reduction case II,
this is discussed in \cite{Campagna-Stevenhagen}, and we present 
a first type of examples for our Lang-Trotter case III in our final Section \ref{S:A level 6 obstruction}.
Such higher level obstructions will be explored in more detail in a forthcoming paper.

\bigskip\noindent
{\bf Acknowledgements.}
\emph{
All authors received support from the Max-Planck-Institut f\"ur Mathematik in Bonn while working on this paper.
They thank the institute for its financial support and for its very inspiring atmosphere.}

\vfil\eject
\section{Characterization by splitting conditions}
\label{S:splittingconditions}

\noindent
In each of the three cases discussed in the introduction, 
we can characterize the corresponding sets of primes
$S_{K, x}$, $S_{E/K}$, and $S_{E/K, P}$ of $K$
in terms of the splitting behaviour of their elements $\gothp$ 
in suitable extensions $K\subset K_\ell$, with $\ell$ ranging over all prime numbers.

\medskip\noindent
{\bf I. Multiplicative primitive root case.}
Let $K$ be a number field 
and $x\in K^*$ non-torsion.
Define $K_m=\Que(\zeta_m,\root m \of x)$ for $m\in\Zee_{\ge1}$
as the `$m$-division field of $x$', i.e., the splitting field over $K$ of the polynomial $X^m-x\in K[X]$.
If $\gothp$ is a prime of~$K$ of characteristic $p$ for which $x$ is 
a $\gothp$-adic unit, 
the index $[k_\gothp^*:\langle \overline x\rangle]$
is divisible by a prime $\ell\ne p$ if and only if $\gothp$ 
splits completely in $K\subset K_\ell$.

Note that $[k_\gothp^*:\langle \overline x\rangle]$ is 
never divisible by $p=\characteristic(\gothp)$, even though
$\gothp$ may split completely in $K_p$.
Example: $x=17$ is a primitive root modulo the prime
$\gothp_3$ of norm 3 in $K=\Que(\sqrt{-21})$, but $\gothp_3$
splits completely in the sextic extension
\[
K\subset K_3=
K(\zeta_3,\root 3\of {17})= K(\sqrt 7,\root 3\of {17}).
\]
This can however only happen for primes 
$\gothp|2\Delta_K$, with $\Delta_K$ the discriminant of $K$, since $K\subset K_p$ is ramified at all $\gothp|p$ 
for $p$ coprime to $2\Delta_K$.
In other words: for almost all $\gothp$,
the `condition at $\ell$' in the following Lemma is
automatically satisfied at $\ell=\characteristic{\gothp}$.
\begin{lemma}
\label{primrootatell}
For $\gothp$ a prime of $K$ outside the support of $x$, we have
$
k_\gothp^*=\langle \overline x\rangle $
if and only if $\gothp$ does not split completely in
$K\subset K_\ell$ for any prime $\ell\ne\characteristic{\gothp}$.
\qed
\end{lemma}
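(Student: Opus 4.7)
My plan is to reduce the statement to a prime-by-prime analysis and then, for each prime $\ell\ne\characteristic(\gothp)$, to match divisibility of the index $[k_\gothp^*:\langle\bar x\rangle]$ by $\ell$ with complete splitting of $\gothp$ in $K\subset K_\ell$.

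First I would observe that $k_\gothp^*$ is cyclic of order $N\gothp-1$, which is coprime to $p=\characteristic(\gothp)$, so the index $[k_\gothp^*:\langle\bar x\rangle]$ is never divisible by $p$. Hence $\bar x$ fails to generate $k_\gothp^*$ if and only if some prime $\ell\ne p$ divides this index. An elementary argument in cyclic groups then gives, for $\ell\ne p$, that $\ell$ divides the index exactly when both $\ell\mid N\gothp-1$ and $\bar x\in(k_\gothp^*)^\ell$.

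Next I would translate these two conditions into splitting behavior at $\gothp$. The condition $\ell\mid N\gothp-1$ is equivalent to $\zeta_\ell\in k_\gothp$, i.e., to $X^\ell-1$ factoring into linear factors over $k_\gothp$. Given this, $\bar x\in(k_\gothp^*)^\ell$ is equivalent to $X^\ell-x$ having a root in $k_\gothp$, and hence to $X^\ell-x$ splitting completely there. Conversely, complete splitting of $X^\ell-x$ over $k_\gothp$ forces $\zeta_\ell\in k_\gothp$ by taking a ratio of two roots. So $\ell\mid[k_\gothp^*:\langle\bar x\rangle]$ holds if and only if $X^\ell-x$ splits into linear factors modulo~$\gothp$.

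Finally, I would match this polynomial condition with the geometric one. Under the hypotheses $\ell\ne p$ and $\gothp$ outside the support of $x$, the discriminant of $X^\ell-x$, namely $\pm\ell^\ell x^{\ell-1}$, is a $\gothp$-adic unit, so $K\subset K_\ell$ is unramified at $\gothp$, and complete splitting of $\gothp$ in $K_\ell$ is read off from the factorization of $X^\ell-x$ on residue fields. Combining the equivalences and negating over all primes $\ell\ne p$ yields the lemma. The only delicate point — and the sole reason the hypothesis $\ell\ne\characteristic(\gothp)$ appears in the statement — is precisely this unramifiedness, which fails for $\ell=p$; at such $\ell$, $\gothp$ can split completely in $K_p$ without any effect on the index, as illustrated by the example $x=17$, $K=\Que(\sqrt{-21})$ from the preceding discussion.
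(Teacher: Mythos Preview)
Your proof is correct and follows exactly the approach implicit in the paper: the paper states the key equivalence (that $\ell\mid[k_\gothp^*:\langle\bar x\rangle]$ for $\ell\ne\characteristic(\gothp)$ if and only if $\gothp$ splits completely in $K_\ell$) in the text preceding the lemma and then marks the lemma itself with \qed, treating the details as standard. You have simply supplied those details---the cyclic-group argument, the translation via $\zeta_\ell\in k_\gothp$ and $\bar x\in(k_\gothp^*)^\ell$, and the unramifiedness via the discriminant---all of which are routine and correct.
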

\noindent
By Lemma \ref{primrootatell}, the set $S_{K, x}$ of primes in $K$ for which $x$ is a primitive root is up to finitely many primes equal to the set of primes that do not split completely in $K\subset K_\ell$
for any prime $\ell$.
For $m\in\Zee_{\ge1}$, the set of primes $\gothp$ of $K$ that split completely in $K\subset K_m$
has natural density $1/[K_m:K]$.
Under GRH, it follows from \cite{Lenstra} that the set $S_{K, x}$ has a natural density that is given by 
the inclusion-exclusion sum
\begin{equation}
\label{deltaKx}
\delta_{K,x}=
\sum_{m=1}^{\infty} \frac{\mu(m)}{[K_m:K]}
\end{equation}
that converges slowly, but that can be rewritten in `factored form' as
\begin{equation}
\label{deltaKx-factored}
\delta_{K,x}=
\sum_{m|N} \frac{\mu(m)}{[K_m:K]}\cdot
   \prod_{\ell\nmid N \text{ prime}} (1-\frac{1}{\ell(\ell-1)}).
\end{equation}
Here we can take for $N$ any integer 
divisible by the primes in some finite set of \emph{critical primes}.
One may take for this set the set of primes that are either in the support of $x$ or 
divide $2\Delta_K$,  
together with those primes $\ell$ for which $x$ is in~${K^*}^\ell$.
The essential feature of $N$ is that the family 
$\{K_\ell\}_{\ell\nmid N}$ of `$\ell$-division fields of $x$
outside~$N$' is a linearly disjoint family over $K$ 
with each $K_\ell$ having the full Galois group 
$\Gal(K_\ell/K)\cong \Aff_1(\FF_\ell)=\FF_\ell\rtimes \FF_\ell^*$
of order $\ell(\ell-1)$, and that the compositum $L$ 
of the fields in this family satisfies $L\cap K_N=K$.

\medskip\noindent
{\bf II. Cyclic reduction case.}
For an elliptic curve $E/K$, we consider the set $S_{E/K}$ of 
primes of cyclic reduction of $E$, i.e.,
the primes $\gothp$ of $K$
for which $E$ has good reduction and the reduced elliptic
curve point group $E(k_\gothp)$ is cyclic.
The condition that $E$ have good reduction modulo $\gothp$ 
only excludes the finitely many primes dividing the
discriminant $\Delta_E$ of $E$.
 
For $m\in\Zee_{\ge1}$, we define $K_m=K(E[m](\overline \Que))$ 
in this case to be the $m$-division field of $E$ over~$K$.
The following elementary lemma \cite{Campagna-Stevenhagen}*{Corollary 2.2} 
is the analogue of Lemma \ref{primrootatell}. 
It expresses the fact that a finite abelian group is cyclic if and only if 
its $\ell$-primary part is cyclic for all primes $\ell$.

\begin{lemma}
\label{cyclicreductionatell}
A prime $\gothp$ of good reduction of $E/K$
is a prime of cyclic reduction
if and only if $\gothp$ does not split completely in 
$K\subset K_\ell$ for any prime 
$\ell\ne\characteristic{\gothp}$.
\qed
\end{lemma}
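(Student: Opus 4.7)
The plan is to reduce the cyclicity of the finite abelian group $E(k_\gothp)$ to conditions on its $\ell$-primary components, one prime $\ell$ at a time, and then reinterpret each local condition as a splitting condition in $K\subset K_\ell$ via the Galois theory of torsion points.

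First I would use the elementary fact that a finite abelian group is cyclic if and only if each of its $\ell$-Sylow subgroups is cyclic. Applied to $E(k_\gothp)$ (which is abelian of order $\#E(k_\gothp)$), this reduces the statement to showing: for each prime $\ell$, the $\ell$-part of $E(k_\gothp)$ is non-cyclic exactly when $\ell\ne\characteristic(\gothp)$ and $\gothp$ splits completely in $K\subset K_\ell$.

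Next I would analyze the $\ell$-part. The $\ell$-Sylow subgroup of $E(k_\gothp)$ is non-cyclic if and only if $E[\ell](k_\gothp)$ has order $\ell^2$, i.e., if all of $E[\ell]$ is $k_\gothp$-rational; indeed if some $\ell$-Sylow were $\Zee/\ell^a\times \Zee/\ell^b$ with $a,b\ge 1$, its $\ell$-torsion is $(\Zee/\ell)^2$ and thus equals $E[\ell]$. For $\ell=\characteristic(\gothp)$ this can never happen, since in characteristic $\ell$ the group $E[\ell]$ has order at most $\ell$; so the condition at $\ell=\characteristic(\gothp)$ is vacuous and only primes $\ell\ne\characteristic(\gothp)$ need to be considered.

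Finally I would translate the condition $E[\ell]\subseteq E(k_\gothp)$ into a splitting statement. For $\ell\ne\characteristic(\gothp)$ and $\gothp$ of good reduction, reduction modulo $\gothp$ induces an isomorphism $E[\ell](\overline K)\isar E[\ell](\overline{k_\gothp})$ that is equivariant for the action of the decomposition group at $\gothp$. Hence the Frobenius at $\gothp$ acts trivially on $E[\ell]$ precisely when $E[\ell]\subseteq E(k_\gothp)$, which by definition of $K_\ell=K(E[\ell])$ is equivalent to $\gothp$ splitting completely in $K\subset K_\ell$. Combining the three steps yields the lemma. The only subtle point—and really the only thing to be careful about—is the characteristic issue, since the Galois correspondence between $K_\ell$ and $E[\ell]$-rationality only works cleanly when $\ell$ is coprime to the residue characteristic; this is exactly why the statement excludes $\ell=\characteristic(\gothp)$.
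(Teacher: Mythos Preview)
Your proof is correct and follows exactly the approach the paper indicates: the lemma is stated with an immediate \qed and only the one-line justification that ``a finite abelian group is cyclic if and only if its $\ell$-primary part is cyclic for all primes $\ell$'' together with a reference to \cite{Campagna-Stevenhagen}*{Corollary 2.2}. Your three steps (reduce to $\ell$-Sylow cyclicity, identify non-cyclicity with full $\ell$-torsion being rational, translate via the reduction isomorphism into complete splitting in $K_\ell$) are precisely the standard details behind that sentence, including the observation that the $p$-part is automatically cyclic for $p=\characteristic(\gothp)$.
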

\noindent
As in the multiplicative Case I, cyclicity of the $p$-primary part of the groups $E(k_\gothp)$
is automatic for $p=\characteristic (\gothp)$.
Also here, total splitting of $\gothp$ in non-trivial extensions $K\subset K_p$ for 
$p=\characteristic{\gothp}$ does occur: it suffices to base change any elliptic curve $E/K$ with 
$K\subset K_p=K[X]/(f)$ non-trivial by an extension $K\subset L= K[X]/(g)$ with 
$f$ and $g$ polynomials of the same degree that are $\gothp$-adically close, 
but with $g$ Eisenstein at a prime $\gothq$ that is unramified in $K\subset K_p$.
For $E/L$, the non-trivial extension $L\subset L_p$ will be totally split at all primes dividing $\gothp$.
Again, this can happen only at primes $\gothp|2\Delta_K$, as otherwise $K\subset K_p$ will be
ramified at all primes $\gothp$ of characteristic $p$ by the fact that $K_p$ contains $\zeta_p$.
Thus, for almost all~$\gothp$,
the `condition at $\ell$' in Lemma \ref{cyclicreductionatell} is again 
automatically satisfied at $\ell=\characteristic{\gothp}$.
The finitely many primes dividing $2\Delta_K$ are clearly irrelevant when dealing with the density of the set $S_{E/K}$, which, just like in the previous case, coincides up to 
finitely many primes with the set of primes $\gothp$ of $K$ that do not 
split completely in $K\subset K_\ell$ for any prime $\ell$.

Under GRH, the density of $S_{E/K}$ is again given 
\cite{Campagna-Stevenhagen}*{Section 2} by an
inclusion-exclusion sum that we already know from 
\eqref{deltaKx}:
\begin{equation}
\label{deltaEK}
\delta_{E/K}=\sum_{m=1}^{\infty} \frac{\mu(m)}{[K_m:K]}.
\end{equation}
If $E$ is without CM over $\overline\Que$,
or has CM by an order $\OO\subset K$, there is in each case a factorization of $\delta_{E/K}$ that is typographically identical to
\eqref{deltaKx}, provided that $N$ is divisible 
by all  primes from an appropriately defined finite set of critical primes
\cite{Campagna-Stevenhagen}*{Theorems 1.1 and 1.2}.
If $E$ has CM by an order $\OO\not\subset K$, there is a
hybrid formula 
\cite{Campagna-Stevenhagen}*{Theorem 1.4}
with different contributions from ordinary and supersingular primes.

A `factorization formula' for $\delta_{K, x}$ and 
$\delta_{E/K}$ as in \eqref{deltaKx-factored} shows that
the vanishing of these densities is always caused
by an obstruction at some finite level $N$. 
For such $N$, no element in $\Gal(K_N/K)$ restricts for all $\ell|N$ to a non-trivial 
element of $\Gal(K_\ell/K)$. 
As a consequence, there are no non-critical primes in $S_{K,x}$ or~$S_{E/K}$: 
the Frobenius elements of such primes in $\Gal(K_N/K)$ 
cannot exist for group theoretical reasons.

An obstruction at prime level $N=\ell$, which means 
an equality $K=K_\ell$, amounts in the cases I and II to a `trivial' reason
that we already mentioned in the context of Conditions A and~B in 
Theorem \ref{thm:lneverprim}.
For $K=\Que$, vanishing of $\delta_{\Que, x}$ and $\delta_{E/\Que}$ only occurs if we have $K=K_\ell$ for a prime $\ell$, which in this case has to be $\ell=2$.

Over general number fields, vanishing may be caused
by obstructions that occur \emph{only} at composite levels.  
Typical examples can be constructed by base changing a
non-vanishing example $(K, x)$ or $E/K$ to a suitable extension field $K\subset L$.
Example \ref{ex:counterexample} accomplishes vanishing of $\delta_{K,x}$ 
by an obstruction at level $30=2\cdot 3\cdot 5$ for the field $K=\Que(\sqrt 5)$ 
that does not arise at lower level by cleverly choosing $x$.
In \cite{Campagna-Stevenhagen}*{Example 5.4} we find 
a base change of an elliptic curve $E/\Que$ to a field $K$ of degree 48
with a similar level 30 obstruction to cyclic reduction.

\medskip\noindent
{\bf III. Elliptic primitive root case.}
In addition to the elliptic curve $E/K$, we are now given a
point $P\in E(K)$ of infinite order.
We consider the set $S_{E/K, P}$ of primes $\gothp$ of $K$ for which $E$ has cyclic reduction and 
the reduction of the point $P$ modulo~$\gothp$ generates the group $E(k_\gothp)$. 
Note the obvious inclusion $S_{E/K, P}\subset S_{E/K}$.

For $m\in\Zee_{\ge1}$, we let $K_m=K(m^{-1}P)$ be the 
\emph{$m$-division field  of $P$}, i.e., the extension of $K$ 
generated by the points of the subgroup
of $E(\overline K)$ defined as
\[
\langle m^{-1} P\rangle = 
\{Q\in E(\overline K): mQ \in \langle P\rangle\}.
\]
Note that this extension $K_m$ contains the $m$-division field of the elliptic curve $E$
that we encountered in the cyclic reduction case II.
The $m$-division field $K_m=K(m^{-1}P)$ of $P$ is again unramified over $K$ 
at primes $\gothp$ of good reduction coprime to $m$. 
The proof of this fact is as for the $m$-division field of $E$: 
as the $m$-th roots $Q\in E(\overline K)$ of $P$ that generate $K_m$ over $K$ 
differ by $m$-torsion points,
their reductions modulo a prime over $\gothp$ remain different, 
so inertia acts trivially on the set of such~$Q$.

The quotient group 
$V_m=\langle m^{-1} P\rangle / \langle P\rangle$
is a free module of rank 3 over $\Zee/m\Zee$.
It comes with a natural linear action of the absolute Galois group $G_K$ of $K$, and this 
mod-$m$ Galois representation induces an embedding
\begin{equation}
\label{mod-ell-rep}
G_m=\Gal(K_m/K)\hookrightarrow \GL(V_m)\iso \GL_3(\Zee/m\Zee).
\end{equation}
As $G_K$ stabilizes the rank 2 subspace 
$U_m=E[m](\overline K)\subset V_m$, this is a `reducible' representation.
Write $V_m=U_m\oplus (\Zee/m\Zee)\cdot \bar Q$ with $\bar Q = (Q\bmod \langle P\rangle) \in V_m$
the image of a point $Q\in E(\overline K)$ satisfying $m Q=P$.
We then have a split exact sequence 
\[
0\to U_m \tto  V_m \tto \Zee/m\Zee\cdot \bar Q \to 0
\]
of free $\Zee/m\Zee$-modules that is split as a sequence of 
$(\Zee/m\Zee)[G_m]$-modules if and only if we have
$P\in m \cdot E(K)$.
After choosing an $\Zee/m\Zee$-basis $\{T_1, T_2\}$ of $U_m$ 
and extending it by some $\bar Q$ as above to a 
$\Zee/m\Zee$-basis for $V_m$,
the matrix representation of $\sigma\in G_m$ becomes
\begin{equation}
\label{matrixrep}
\sigma=\sigma_{A,b}=
\begin{pmatrix}
a_{11} & a_{12} & b_1 \\
a_{21} & a_{22} & b_2 \\
0 & 0 & 1 
\end{pmatrix},
\end{equation}
in which 
the linear action of $\sigma$ on $U_m$ with respect to 
some $\Zee/m\Zee$-basis $\{T_1, T_2\}$ of~$U_m$
is described by
$A=\begin{pmatrix}
a_{11} & a_{12}  \\
a_{21} & a_{22}  
\end{pmatrix}
\in \GL(U_m)$,
and 
\[
b=b_1T_1+b_2T_2=\sigma Q - Q
\]
is the translation action of $\sigma$ on some chosen `$m$-th root' 
$Q$ of $P$ with respect to that same basis.
In other words, $V_m$ gives a Galois representation of $G_K$ with image $G_m\subset \GL(V_m)$
that is contained in the 2-dimensional
affine group
\[
\Aff_2(\Zee/m\Zee)=(\Zee/m\Zee)^2 \rtimes \GL_2(\Zee/m\Zee).
\]
In the important case where $m=\ell$ is prime, we are in the classical situation
of a 3-dimensional Galois representation over the finite field $\FF_\ell$.

The analogue in the elliptic primitive root case of the 
Lemmas \ref{primrootatell} and \ref{cyclicreductionatell}
is a little bit more involved.
We have to impose a condition on the Frobenius elements 
$\Frob_{\gothp, \ell}\in G_\ell$ at all primes $\ell\ne\characteristic{\gothp}$
different from being \emph{equal} to the identity element $\id_\ell\in G_\ell$: 
in this case it only needs to be `sufficiently close' to it.
\begin{lemma}
\label{ellprimrootatatell}
For $P\in E(K)$ of infinite order and $\gothp$ a prime of good reduction of~$E$ of characteristic different from $\ell$ we have 
\[
\ell | [E(k_\gothp):\langle {\bar P}\rangle ]
\quad \Longleftrightarrow \quad 
\rank (\Frob_{\gothp,\ell} - \id_\ell)\le 1.
\]
\end{lemma}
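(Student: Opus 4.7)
The plan is to unwind both sides of the equivalence via the block matrix description \eqref{matrixrep} of $\Frob_{\gothp,\ell}$ and compare. Write $\Frob_{\gothp,\ell}=\sigma_{A,b}$ with $A\in\GL_2(\FF_\ell)$ describing the action on $U_\ell=E[\ell]$ and $b\in\FF_\ell^2$ the translation on a chosen $\ell$-th root $Q\in E(\bar K)$ of $P$. Then
\[
\Frob_{\gothp,\ell}-\id_\ell=\begin{pmatrix} A-I_2 & b\\ 0 & 0\end{pmatrix}
\]
has rank $\rank(A-I_2)$ or $\rank(A-I_2)+1$ according as $b$ does or does not lie in the image of $A-I_2$. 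Hence $\rank(\Frob_{\gothp,\ell}-\id_\ell)\le 1$ holds in exactly two situations: either (i) $A=I_2$, or (ii) $\rank(A-I_2)=1$ and $b\in\operatorname{im}(A-I_2)$.

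Next, I would read off $A$ and $b$ arithmetically. Since $E$ has good reduction at $\gothp$ and $\ell\ne\characteristic(\gothp)$, reduction gives a Galois-equivariant isomorphism $E[\ell](\bar K)\iso E[\ell](\bar{k_\gothp})$, so $E(k_\gothp)[\ell]=\ker(A-I_2)$; the three values $\rank(A-I_2)=0,1,2$ therefore correspond respectively to $E(k_\gothp)[\ell^\infty]$ being non-cyclic of $\FF_\ell$-rank $2$, cyclic with non-trivial $\ell$-part, or trivial. To interpret $b$, I would reduce $Q$ modulo a prime of $K_\ell$ above $\gothp$: the reduction $\bar Q\in E(\bar{k_\gothp})$ satisfies $\ell\bar Q=\bar P$, and $\Frob(\bar Q)-\bar Q=b\in E[\ell](\bar{k_\gothp})$. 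Since $\operatorname{im}(A-I_2)=\operatorname{im}(I_2-A)$, the condition $b\in\operatorname{im}(A-I_2)$ is precisely the condition that some adjustment $\bar Q+T$ by an $\ell$-torsion point $T$ lies in $E(k_\gothp)$, i.e.\ that $\bar P\in \ell\cdot E(k_\gothp)$.

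Finally, I would translate the divisibility condition itself. A short calculation in finite abelian groups, using that the prime-to-$\ell$ part of $E(k_\gothp)$ is automatically $\ell$-divisible, shows that $\ell\mid[E(k_\gothp):\langle\bar P\rangle]$ if and only if either $E(k_\gothp)[\ell]$ has $\FF_\ell$-dimension $2$, or $E(k_\gothp)[\ell]$ has dimension $1$ and $\bar P\in\ell\cdot E(k_\gothp)$. Matching these two cases with (i) and (ii) above, via the interpretations of $\rank(A-I_2)$ and $b\in\operatorname{im}(A-I_2)$ from the previous step, yields the desired equivalence.

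The main conceptual step, rather than a serious obstacle, is the Kummer-style identification of $b\in\operatorname{im}(A-I_2)$ with the geometric divisibility $\bar P\in\ell\cdot E(k_\gothp)$: one has to verify that the Galois data $(A,b)$ defining $\Frob_{\gothp,\ell}$ in $G_\ell$ transports faithfully to the reduction and still describes the Frobenius action on the fibre $\ell^{-1}\bar P\subset E(\bar{k_\gothp})$, and this is exactly what the hypothesis $\ell\ne\characteristic(\gothp)$ ensures.
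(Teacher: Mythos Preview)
Your proof is correct and follows essentially the same line as the paper's: both split into the cases ``$E(k_\gothp)$ has full $\ell$-torsion'' versus ``cyclic $\ell$-part and $\bar P\in\ell\cdot E(k_\gothp)$'', and match these to the rank condition. The only difference is cosmetic: the paper phrases the rank condition as ``$\Frob_{\gothp,\ell}$ fixes a $2$-dimensional subspace of $V_\ell$'' and reads off the two cases according to whether that subspace is $U_\ell$ or not, whereas you compute the rank of the block matrix directly and read off the same two cases as $A=I_2$ versus $\rank(A-I_2)=1$ with $b\in\operatorname{im}(A-I_2)$; these are dual via rank--nullity.
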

\begin{proof}
As all $V_\ell$ are 3-dimensional over $\FF_\ell$, the condition $\rank (\Frob_{\gothp,\ell} - \id_\ell)\le 1$
means that $\Frob_{\gothp,\ell}$ is the identity on an $\FF_\ell$-subspace of $V_\ell$ of dimension at least~2.
If it equals~$U_\ell$, then $E(k_\gothp)$ has complete $\ell$-torsion 
of order $\ell^2$ and every cyclic subgroup $\langle {\bar P}\rangle$ has index divisible by $\ell$.
If not, it intersects $U_\ell$ in a 1-dimensional subspace, so we have a point of order $\ell$ 
in $E(k_\gothp)$ and a point $\bar Q\in E(k_\gothp)$ satisfying $\ell\bar Q=\bar P$.
This also implies that $[E(k_\gothp):\langle {\bar P}\rangle]$ is divisible by $\ell$.

Conversely, if $\ell$ divides $[E(k_\gothp):\langle {\bar P}\rangle]$ then either 
$E(k_\gothp)$ has complete $\ell$-torsion or $E(k_\gothp)$ has a cyclic non-trivial $\ell$-part and
$\bar P$ is contained in $\ell\cdot E(k_\gothp)$. 
In both cases $\Frob_{\gothp,\ell}$ is the identity on a subspace of $V_\ell$ of dimension at least 2.
\end{proof}

\begin{corollary}
\label{LTcondition}
Let $P\in E(K)$ be of infinite order and $\gothp$ a prime of 
good reduction of $E$ of prime norm $\characteristic{\gothp}>5$ 
for which $\bar P\ne \bar O\in E(k_\gothp)$.
Then we have
\[
E(k_\gothp)=\langle {\bar P}\rangle
\quad \Longleftrightarrow \quad 
\rank (\Frob_{\gothp,\ell} - \id_\ell)\ge 2
\text{ for all primes $\ell$}.
\]
\end{corollary}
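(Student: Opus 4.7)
The plan is to reduce the corollary to Lemma~\ref{ellprimrootatatell}, using the elementary observation that $E(k_\gothp) = \langle \bar P\rangle$ is equivalent to $[E(k_\gothp):\langle \bar P\rangle] = 1$, which in turn amounts to the statement that no prime $\ell$ divides this index. For every prime $\ell$ different from $p := \characteristic(\gothp)$, Lemma~\ref{ellprimrootatatell} translates non-divisibility of the index by $\ell$ into the rank condition $\rank(\Frob_{\gothp,\ell} - \id_\ell) \ge 2$; both directions of the desired equivalence therefore follow at once at all such primes.

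The only remaining task, and the sole real obstacle, is to handle the residual prime $\ell = p$, at which $K_p/K$ may ramify at $\gothp$ and Lemma~\ref{ellprimrootatatell} no longer applies. Here I would argue that under the stated hypotheses $p$ never divides $[E(k_\gothp):\langle \bar P\rangle]$, so the prime $p$ can be dropped harmlessly from the conjunction. The crucial input is the Hasse--Weil bound $\#E(k_\gothp) \le p + 1 + 2\sqrt{p}$, which for $p > 5$ sharpens to $\#E(k_\gothp) < 2p$. Hence the $p$-primary part of the finite abelian group $E(k_\gothp)$ is cyclic of order $1$ or $p$. In the first case, $p$ tautologically does not divide the index. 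In the second case, the inequality $\#E(k_\gothp) < 2p$ forces $\#E(k_\gothp) = p$ exactly, so $E(k_\gothp)$ is cyclic of prime order, and the hypothesis $\bar P \ne \bar O$ ensures $\langle \bar P\rangle = E(k_\gothp)$.

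The numerical input $p > 5$ is actually used only in the step $\#E(k_\gothp) < 2p$: for $p = 5$ the Hasse bound still permits $\#E(\FF_5) = 10$, in which case a non-identity $\bar P$ of order $2$ would have index $5$ in $E(\FF_5)$ and the clean equivalence would break. Once this numerical point is secured, the corollary follows by concatenating the prime-by-prime translations provided by Lemma~\ref{ellprimrootatatell} over all $\ell \ne p$, with the prime $\ell = p$ silently absorbed into the hypotheses $\bar P \ne \bar O$ and $\characteristic(\gothp) > 5$.
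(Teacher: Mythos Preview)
Your proof is correct and follows essentially the same route as the paper's: invoke Lemma~\ref{ellprimrootatatell} for all primes $\ell\ne p=\characteristic(\gothp)$, and then rule out $p\mid[E(k_\gothp):\langle\bar P\rangle]$ via the Hasse bound $\#E(\FF_p)<p+1+2\sqrt p<2p$ together with the hypothesis $\bar P\ne\bar O$. The paper phrases the Hasse-bound step slightly more directly---if $p$ divided the index then $\langle\bar P\rangle$ would have order at most $\#E(\FF_p)/p<2$, forcing $\bar P=\bar O$---whereas you split into the cases $p\nmid\#E(k_\gothp)$ and $p\mid\#E(k_\gothp)$, but the content is identical.
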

\begin{proof}
By Lemma \ref{ellprimrootatatell}, the condition on the right 
side says that $p=\characteristic{\gothp}$ is the only possible
prime divisor of the index 
$[E(k_\gothp):\langle {\bar P}\rangle ]$.
For a prime $\gothp$ of degree one, i.e., of prime norm $p$,
the index of a subgroup
of $E(k_\gothp)=E(\FF_p)$ can only be divisible by $p$ 
if it is the trivial subgroup, as we have 
$\#E(\FF_p)< p+1+2\sqrt p<2p$ for $p>5$.
So we have $E(k_\gothp)=\langle {\bar P}\rangle$ unless 
$\gothp$ is a prime for which we have 
$\bar P = \bar O\in E(k_\gothp)$.
As we have $P\ne O\in E(K)$, this happens only for finitely 
many $\gothp$.
\end{proof}
\noindent
In density questions, we can disregard any finite set of primes, and more generally a set of primes of density zero.
The set of primes in a number field of degree bigger than one is such a zero density set.
For this reason, the density of the set
$S_{E/K,P}$ only depends on the primes of degree one outside any finite set of `critical primes' that it contains.
Thus, Corollary \ref{LTcondition} can play the same role as the 
Lemmas \ref{primrootatell} and~\ref{cyclicreductionatell}.

In order to express the `heuristical density' 
$\delta_{E/K, P}$ of $S_{E/K,P}$,
we define the subset 
$S_\ell\subset G_\ell=\Gal(K_\ell/K)$ 
of `bad' elements at the prime $\ell$ as
\[
S_\ell=\{\sigma\in G_\ell: 
\rank_{\FF_\ell} (\sigma - \id_\ell)\le 1\}.
\]
For arbitrary $m\in\Zee_{\ge1}$ and $\ell|m$ prime we let $\pi_{m,\ell}:G_m\to G_\ell$ be the natural
restriction map, and define $S_m\subset G_m$ as 
\[
S_m=\bigcup_{\ell|m}  \pi_{m,\ell}^{-1}[S_\ell].
\]
With $s_m=\#S_m$ denoting the cardinality of $S_m$, the \emph{elliptic primitive root density}
is now given by the inclusion-exclusion sum  
\begin{equation}
\label{deltaEP}
\delta_{E,P}=
\sum_{m=1}^{\infty} \frac{\mu(m)s_m}{[K_m:K]}.
\end{equation}
It is the elliptic analogue of the multiplicative primitive root density \eqref{deltaKx}.
It is an upper density for $S_{E/K,P}$ that has not been proven to be its true density 
in cases with $\delta_{E/K, P}>0$, not even under GRH.

We can compute $\delta_{E/K, P}$ using the methods of \cite{Campagna-Stevenhagen}.
This is not directly relevant for us, as our focus in this paper is on cases
where $\delta_{E/K, P}$ vanishes in `non-trivial' ways, so we will merely sketch this here.
In order to obtain a factorization 
\begin{equation}
\label{deltaEP-factored}
\delta_{E/K,P}=
\sum_{m|N} \frac{\mu(m)s_m}{[K_m:K]}\cdot
   \prod_{\ell\nmid N \text{ prime}} (1-\frac{s_\ell}{[K_\ell:K]}).
\end{equation}
as in \eqref{deltaKx-factored}, it suffices to have 
an `open-image theorem' for the Galois representation $\rho_{E,P}$ 
arising from the action of $G_K$ on the subgroup
\[
R_P=\{Q\in E(\bar K): m Q\in \langle P\rangle
\text{ for some } m\in \Zee_{\ge1}\} \cong (\Que/\Zee)^2\times \Que
\]
of $E(\bar K)$ generated by all the roots of $P$ in $E(\bar K)$.
The Galois action of $G_K$ on the quotient group $V=R_P/\langle P\rangle$, 
which is free of rank 3 over $\Que/\Zee$, gives rise to a Galois representation 
\[
\rho_{E,P}: G_K \tto \Aut(V)\iso \GL_3(\Zhat),
\]
which has \eqref{mod-ell-rep} as its mod-$m$ representation.
It factors via $\Gal(K(R_P)/K)$, with $K(R_P)=\bigcup_m K_m\subset \bar K$ 
the compositum of all `$m$-division fields' of $P$ inside $\bar K$.
The group $U=E(\overline K)^\tor\iso (\Que/\Zee)^2$ is a direct summand of $V$,
and if we choose a $\Que/\Zee$-basis for $V=U\oplus \Que/\Zee$ 
as we did for $V_m=U_m\oplus \Zee/m\Zee\cdot\bar Q$,
the image of $\rho_{E,P}$ is in $\Aff_2(\Zhat)=\Zhat^2\rtimes \GL_2(\Zhat)$.
For $E$ without CM over $\bar K$, one deduces from Serre's open image theorem
that this image is of finite index in $\Aff_2(\Zhat)$, 
which yields \eqref{deltaEP-factored} for any $N$ divisible by some
finite product $N_{E/K,P}\in\Zee_{>0}$ of critical primes.
As in~\cite{Campagna-Stevenhagen}, one deduces that all non-CM-densities $\delta_{E,P}$
are rational multiples of a universal constant.
If $E$ has CM over $\bar K$ by an order $\OO\subset K$, 
one replaces $\Aff_2(\Zhat)$ by $\Aff_1(\OO)$,
and in the case of CM by an order $\OO\not\subset K$, one separates 
the contribution of ordinary and supersingular primes of $E/K$ 
as in \cite{Campagna-Stevenhagen}.

\section{Multiplicative primitivity}
\label{S:multiplicative primitivity}

\noindent
Before focusing on the Lang-Trotter case III, 
we first settle the multiplicative primitive root case: under GRH,
globally primitive elements $x\in K^*$ are locally primitive
for a set of primes of positive density $\delta_{K,x}$.

\begin{proof}[Proof of Theorem \ref{thm:mult-prim}]
Let $x\in K^*$ be globally primitive.
As we assume GRH, the primitive root density for $x\in K^*$ exists 
and is equal to $\delta_{K,x}$ defined in~\eqref{deltaKx}, by the results of \cite{Lenstra}.
We need to show that $\delta_{K,x}$ does not vanish.
In view of the factorization 
formula~\eqref{deltaKx-factored}, it suffices to show that 
for any squarefree integer $N>1$, the fraction
$\sum_{m|N} \mu(m)[K_m:K]^{-1}$ of elements in 
$\Gal(K_N/K)$ that have non-trivial restriction to $K_\ell$ for all primes $\ell|N$ does not vanish.

As $x$ is not an $\ell$-th power in $K^*$, the polynomial $X^\ell-x$ is irreducible in $K[X]$.
It therefore gives rise to an extension 
$K\subset K_\ell=\Split_K(X^\ell-x)=K(\zeta_\ell,\root \ell \of x)$ 
of degree $\ell\cdot c_\ell$, with $c_\ell$ a divisor of $\ell-1$. 
If $\ell$ is the largest prime dividing the squarefree number $N$, we conclude that 
$K_{N/\ell}\subset K_N$ is Galois of degree divisible by  $\ell$.

Showing that $\Gal(K_N/K)$ contains an element of the required type is now easily done by induction on the number of of primes dividing the squarefree integer $N>1$. 
If $N$ is prime, then $\Gal(K_N/K)$ contains a non-trivial element of order $N$.
If not, we let $\ell$ be the largest prime dividing $N$ and observe that
an automorphism of the required type in $\Gal(K_{N/\ell}/K)$, 
which exists by the induction hypothesis,
always possesses an extension to the compositum $K_N$ of $K_{N/\ell}$ and $K_\ell$ that is non-trivial on $K_\ell$.
\end{proof}

\noindent
The assumption of global primitivity in Theorem \ref{thm:mult-prim}
cannot be weakened to the assumption $K\ne K_\ell$ for all prime number $\ell$.
The resulting stronger statement is correct for $K=\Que$, 
but counterexamples to it exist for general number fields $K$, 
as the cyclotomic extensions $K\subset K(\zeta_\ell)$ for different $\ell$ may all be non-trivial, but `entangled' over $K$.
The following counterexample takes $K$ to be quadratic.
\begin{example}
\label{ex:counterexample}
The quadratic field $K=\Que(\sqrt 5)$ has 
fundamental unit  $\varepsilon=\frac{1+\sqrt 5}{2}$.
The element $\pi=\varepsilon^2-4=\frac{-5+\sqrt 5}{2}\in K$
has norm 5 and is a square modulo~4.
The field $K(\sqrt\pi)$, which is cyclic of degree 4 
over $\Que$ and unramified outside 5, is therefore equal to $K(\zeta_5)$.
Take $y=-3\pi\in K$ and choose $x=y^{15}$.
We then have 
\[
K_3=K(\zeta_3)=K(\sqrt{-3})\qquad\text{and}\qquad
K_5=K(\zeta_5)=K(\sqrt\pi),
\]
so $K_2=K(\sqrt x)=K(\sqrt y)=K(\sqrt{-3\pi})$ and $K_3$ 
and $K_5$ are three different quadratic extensions of $K$ contained in 
the biquadratic extension $K\subset K_6=K_{10}=K_{15}=K_{30}$.
We have $\mu_K=\{\pm1\}$ and, even though $x$ is not a 
square in $K^*$, there is exactly one prime of $K$ 
modulo which $x$ is a primitive root: $(2)$.
For the primes $\gothp=(3)$ and $\gothp=(\sqrt 5)$ 
the element $x$ is not in $k_\gothp^*$, and 
for all primes of characteristic $p>5$
the index $[\FF_p:\langle\bar x\rangle]$ is divisible by at least one of 2, 3 or 5.
Indeed, no prime can be inert in all three quadratic subfields of 
an extension with group $\Zee/2\Zee\times\Zee/2\Zee$.
\end{example}

\medskip\noindent
The simple observation that no prime $\gothp$ of a number field $K$ can be inert in all three quadratic subextensions of an extension $K\subset L$ with group $\Zee/2\Zee\times\Zee/2\Zee$ underlies many `entanglement obstructions', including the one in our 
final Section \ref{S:A level 6 obstruction}.
\section{Proof of Theorem \ref{thm:lneverprim}}
\label{S:Proof of Theorem 1.2}

\noindent
In the Lang-Trotter situation, 
Lemma \ref{ellprimrootatatell} shows that
a point $P\in E(K)$ will generate a subgroup 
of the local point group $E(k_\gothp)$ of index 
divisible by $\ell$ when $\Frob_{\gothp, \ell}\in G_\ell$
pointwise fixes a 2-dimensional subspace of 
the 3-dimensional $\FF_\ell$-vector space $V_\ell$.
Vanishing of the density $\delta_{E/K,P}$ can therefore occur 
`because of $K_\ell$' in cases where 
$G_\ell=\Gal(K_\ell/K)$ is non-trivial, 
but only contains elements that pointwise fix
a 2-dimensional subspace of $V_\ell$.

Our proof of Theorem \ref{thm:lneverprim}
is based on a general lemma that we phrase and prove in the generality
that was suggested to us by Hendrik Lenstra.
It describes the linear group actions on vector spaces of finite 
dimension over arbitrary fields that have `many fixpoints' 
in the sense of the 3-dimensional example $V_\ell$ that we have at hand.

\medskip\noindent
Let $V$ be any vector space on which a group $G$ acts linearly, and denote by
$$
V^G=\{v\in V: \sigma v=v \text{ for all }\sigma \in G\}
\qquad\text{and}\qquad
V_G= V/(\textstyle\sum_{\sigma\in G} (\sigma-1)V)
$$
the maximal subspace and quotient space of $V$
on which $G$ acts trivially.
For every $\sigma \in G$, we have an exact sequence of vector spaces
$$
0\tto V^{\langle\sigma\rangle} \tto V \mapright {\sigma-1} V
\tto V/(\sigma-1)V\to 0
$$
showing that for $V$ of finite dimension $n$,
we have 
\begin{equation}
\label{invariants-translation}
\dim V^{\langle\sigma\rangle}\ge n-1 
\quad\Longleftrightarrow\quad
\dim (\sigma-1)V \le 1.
\end{equation}

\begin{lemma}
\label{lemma:groupactions}
Let $G$ be a group acting linearly on a vector space $V$ of 
dimension $n\in\Zee_{\ge0}$.
Then the following are equivalent:
\begin{enumerate}
\item
$\dim V^{\langle\sigma\rangle}\ge n-1$ for all $\sigma \in G$;
\item
$\dim V^G\ge n-1$ or\/  $\dim V_G\ge n-1$.
\end{enumerate}
\end{lemma}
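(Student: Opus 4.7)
The plan is to use the basic equivalence \eqref{invariants-translation} to rephrase condition~(1) as the statement that each image $L_\sigma := (\sigma-1)V$ has dimension at most~$1$, and similarly to read condition~(2) as: either $V^G$ has codimension at most~$1$, or $\sum_{\sigma\in G} L_\sigma$ has dimension at most~$1$. The degenerate cases $n\le 1$ are immediate, so I may assume $n\ge 2$. In this setup the implication $(2)\Rightarrow(1)$ is automatic: either $V^G\subseteq V^{\langle\sigma\rangle}$ gives $\dim V^{\langle\sigma\rangle}\ge n-1$ directly, or the containment $L_\sigma\subseteq\sum_\tau L_\tau$ gives it via \eqref{invariants-translation}.

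For $(1)\Rightarrow(2)$ I would split on whether the fixed hyperplanes $H_\sigma:=V^{\langle\sigma\rangle}$, as $\sigma$ ranges over nontrivial elements of $G$, all coincide. If they do, their common value is contained in $V^G$, so $\dim V^G\ge n-1$ and we are done. Otherwise I fix two nontrivial $\sigma,\tau\in G$ with $H_\sigma\ne H_\tau$; the goal then reduces to showing that every nontrivial $\rho\in G$ satisfies $L_\rho=L_\sigma$, so that $\sum_\rho L_\rho$ is a single line $L$ and hence $\dim V_G=n-1$.

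The main obstacle is the key linear-algebra lemma feeding this dichotomy: if $\sigma,\tau\in G$ are nontrivial with distinct fixed hyperplanes, then $L_\sigma=L_\tau$. To prove it, pick $v_\sigma\in H_\sigma\setminus H_\tau$ and $v_\tau\in H_\tau\setminus H_\sigma$, which exist because the hyperplanes are distinct. A short computation gives $(\sigma\tau-1)v_\sigma=\sigma(\tau v_\sigma-v_\sigma)$, a nonzero vector in $\sigma L_\tau$, and $(\sigma\tau-1)v_\tau=\sigma v_\tau-v_\tau$, a nonzero vector in $L_\sigma$. Since $\dim L_{\sigma\tau}\le 1$ by hypothesis, these two nonzero vectors are proportional, forcing the equality of lines $\sigma L_\tau=L_\sigma$. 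The identity $\sigma(\sigma-1)=(\sigma-1)\sigma$ yields $\sigma L_\sigma=L_\sigma$, so applying $\sigma^{-1}$ to $\sigma L_\tau=L_\sigma$ gives $L_\tau=L_\sigma$.

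Finally, for any third nontrivial $\rho\in G$, the hyperplane $H_\rho$ must differ from at least one of $H_\sigma,H_\tau$; applying the key lemma to the relevant pair yields $L_\rho=L_\sigma$. Hence every $L_\rho$ equals a single line~$L$, so $\sum_{\rho\in G}L_\rho=L$ is one-dimensional and $\dim V_G=n-1$, completing~(2). The subtle point is the symmetric exploitation of both $H_\sigma\ne H_\tau$ and $\dim L_{\sigma\tau}\le 1$ in the key lemma; once that is in hand, the passage from a single pair to all of~$G$ is a formal dichotomy.
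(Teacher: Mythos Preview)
Your proof is correct and rests on the same core computation as the paper's: apply condition~(1) to the product $\sigma\tau$ acting on vectors chosen from $H_\sigma\setminus H_\tau$ and $H_\tau\setminus H_\sigma$ to force the lines $L_\sigma$ and $L_\tau$ to coincide. The only difference is organizational. The paper fixes a nontrivial $\sigma$, defines the subgroups $A_\sigma=\{\tau:H_\tau\supset H_\sigma\}$ and $B_\sigma=\{\tau:L_\tau\subset L_\sigma\}$, and argues by contradiction using the fact that a group is never the union of two proper subgroups to find a single $\tau$ with $H_\tau\ne H_\sigma$ \emph{and} $L_\tau\ne L_\sigma$. You instead isolate the implication ``$H_\sigma\ne H_\tau\Rightarrow L_\sigma=L_\tau$'' as a standalone lemma and then run a clean dichotomy (all hyperplanes equal, or all lines equal) that sidesteps the subgroup argument entirely. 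Your route is slightly more direct and avoids checking that $A_\sigma,B_\sigma$ are subgroups; the paper's route makes the symmetry between the invariant and coinvariant conclusions more visible from the outset.
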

\begin{proof}
The implication $(2)\Rightarrow(1)$ is immediate, as the inequality
$\dim V_G\ge n-1$
implies that, for all $\sigma\in G$,
we have $\dim (\sigma-1)V \le 1$ and,
by \eqref{invariants-translation}, 
$\dim V^{\langle\sigma\rangle}\ge n-1$.

For $(1)\Rightarrow (2)$, we can assume 
there exists $\sigma\in G$ acting non-trivially on $V$,
and define subgroups $A_\sigma, B_\sigma \subset G$ by
$$
A_\sigma=
  \{\tau\in G\colon V^{\langle\tau\rangle} \supset V^{\langle\sigma\rangle} \}
\qquad\text{and}\qquad
B_\sigma=
  \{\tau\in G\colon (\tau-1)V \subset (\sigma-1)V \}.
$$
The equality $A_\sigma=G$ implies that $V^G=V^{\langle\sigma\rangle}$
has dimension $n-1$, and the equality $B_\sigma=G$ implies that 
$V_G=V/(\sigma-1)V$ has dimension $n-1$.
In order to show that we have one of these equalities, and therefore (2),
we argue by contradiction. 
Assume $A_\sigma$ and $B_\sigma$ are \emph{strict} subgroups of $G$, and
pick $\tau\in G$ outside $A_\sigma\cup B_\sigma$.
Then there exist 
$s\in V^{\langle\sigma\rangle}\setminus V^{\langle\tau\rangle}$
and
$t\in V^{\langle\tau\rangle}\setminus V^{\langle\sigma\rangle}$,
and $(\sigma-1)V$ and $(\tau-1)V$ are \emph{different} 1-dimensional
subspaces of $V$ spanned by $(\sigma-1) t$ and $(\tau-1)s$, respectively.

The subspace $(\tau\sigma-1)V$ is 1-dimensional and spanned by
$(\tau\sigma-1)s=(\tau-1)s$, so it equals $(\tau-1)V$.
It contains $(\tau\sigma-1)t=\tau(\sigma-1)t$, but
since $\tau$ acts on $(\sigma-1)t\notin (\tau-1)V$
by translation along a vector in $(\tau-1)V$,
we have $\tau(\sigma-1)t\notin (\tau-1)V$. Contradiction.
\end{proof}
\noindent
For those who like to think of Lemma \ref{lemma:groupactions}
in terms of matrices, Condition~(1) means that every element of $G$ 
has a matrix representation with respect to a suitable basis that, 
according to \eqref{invariants-translation},
can be given in one of the equivalent forms
\begin{equation}
\label{twoforms}
\left(\hspace{-2mm}
\begin{array}{cc}
\framebox[1.5cm][c]{
\mbox{$\begin{array}{c}
     \\
I_{n-1}\\
\\
\end{array}
$}}
&\hspace{-5mm} 
\begin{array}{c} *\\ \vdots \\ *\end{array}\\
  \begin{array}{ccc}
     0  & \cdots & 0 
  \end{array} &\hspace{-5mm}  *
\end{array}\hspace{-2mm}
\right)
\qquad\text{or}\qquad
\left(\hspace{-2mm}
\begin{array}{cc}
*&\hspace{-5mm}\begin{array}{ccc}
     *  & \cdots & * 
  \end{array}\\
 \begin{array}{c} 0\\ \vdots \\ 0\end{array}
 &\hspace{-5mm}
\framebox[1.5cm][c]{\mbox{$
\begin{array}{c} \\ I_{n-1}\\ \\ \end{array}
$}}
\end{array}\hspace{-2mm}
\right).
\end{equation}
The first form shows $n-1$ linearly independent 
vectors in $V^{\langle\sigma\rangle}$, the second 
starts from a vector spanning $(\sigma-1)V$.
The lemma then states that under this condition, a \emph{single} 
basis for $V$ can be chosen such that either all elements of $G$ 
have a matrix representation of the first form, or they all have 
one of the second form.

\begin{example}
\label{Serre-example}
For an elliptic curve $E/K$, we can 
apply Lemma \ref{lemma:groupactions} to the action 
of the Galois group $G$ of the $\ell$-division field 
of $E$ over $K$ on the 2-dimensional $\FF_\ell$-vector space
$V=E[\ell](\overline K)$ of $\ell$-torsion points of $E$.
In this case the point group $E(k_\gothp)$ at a prime 
$\gothp\nmid \ell$ of good reduction is of order divisible 
by $\ell$ if and only if $\Frob_\gothp\in G$ 
pointwise fixes a 1-dimensional subspace of $V$.
We find that almost all local point groups $E(k_\gothp)$ are 
of order divisible by $\ell$ if and only if the 
Galois representation $\rho_{E/K,\ell}$ of $G_K$ 
on the group of $\ell$-torsion points of $E$ 
can be given in matrix form as 
\[
\rho_{E/K,\ell} \sim 
\begin{pmatrix}
1 & *  \\
0 & *  
\end{pmatrix}
\qquad\text{or}\qquad
\rho_{E/K,\ell} \sim 
\begin{pmatrix}
* & *  \\
0 & 1  
\end{pmatrix}.
\]
In words: either $E(K)$ contains an $\ell$-torsion point,
or it is $\ell$-isogenous over $K$ to an elliptic curve 
with a $K$-rational $\ell$-torsion point. 
Moreover, for $E/K$ of the first kind, with a point $T\in E(K)$ of order $\ell$,
the quotient curve $E'=E/\langle T\rangle$ is of the second kind, with
the dual isogeny $E'\to E$ being the $\ell$-isogeny in question.
This is a well-known fact that occurs as the very first exercise  
in \cite{Serre2}*{p.\ I-2}.
\end{example}

\begin{proof}[Proof of Theorem \ref{thm:lneverprim}]
Let $E/K$ be an elliptic curve, and $P\in E(K)$ 
a non-torsion point that is locally $\ell$-imprimitive.
We define $K_\ell=K(\ell^{-1}P)$ as in Section \ref{S:splittingconditions},
and view $G_\ell=\Gal(K_\ell/K)\subset \GL(V_\ell)$ as 
a group of $\FF_\ell$-linear automorphisms of the 
3-dimensional vector space 
$V_\ell= \langle \ell^{-1}P\rangle/\langle P \rangle$.
As every element of $G_\ell$ occurs as the Frobenius 
of infinitely many primes of good reduction, it follows from
Lemma~\ref{ellprimrootatatell} that all elements of $G_\ell$
leave a 2-dimensional subspace of $V_\ell$ pointwise invariant.
We can now apply our Lemma \ref{lemma:groupactions}
for $n=3$ with $G=G_\ell$ and $V=V_\ell$ to conclude that at least one of the following occurs:
either $G_\ell$ acts trivially on a 2-dimensional subspace of $V_\ell$,
or $G_\ell$ acts trivially on a 2-dimensional quotient space of $V_\ell$.

In the first case, if $U_\ell=E[\ell](\overline K)\subset V_\ell$ is a subspace
with trivial $G_\ell$-action, then $E(K)$ has complete $\ell$-torsion
and Condition B is satisfied.
If $G_\ell$ acts trivially on a different 2-dimensional subspace
$S_\ell\subset V_\ell$, than $S_\ell$ is spanned by a non-zero
vector in $U_\ell\cap S_\ell$ and the non-zero image of a point of infinite order
$Q\in \langle \ell^{-1}P\rangle$ in the  $\FF_\ell$-vector space $V_\ell$.
In other words: $E(K)$ contains a torsion point of order $\ell$ and
a point $Q$ with $\ell Q=mP$ for some $m\in\Zee$ 
that is not divisible by $\ell$.
Writing $am+b\ell=1$ in~$\Zee$, the point $Q'=aQ+bP\in E(K)$ satisfies
$\ell Q'= a\ell Q+b\ell P=amP+b\ell P=P$, so Condition A is satisfied.

In the second case, where $G_\ell$ acts trivially on a 2-dimensional quotient space 
$V_\ell/T_\ell$, it acts on $V_\ell$ by translation along the 1-dimensional 
subspace $T_\ell$. 
We will assume that $G_\ell$ does not act trivially on the subspace $U_\ell$, 
as this would bring us back in the first case, with Condition B holding.
As $U_\ell=E[\ell](\overline K)\subset V_\ell$ is $G_\ell$-stable, we have 
strict inclusions $0\subsetneq T_\ell\subsetneq U_\ell$ of $\FF_\ell[G_\ell]$-modules,
so $T_\ell$ is a $K$-rational subgroup of $E(\overline K)$ of order $\ell$.
The corresponding isogeny $E\to E'=E/T_\ell$ is defined over~$K$,
and identifies the $\FF_\ell[G_\ell]$-module $U_\ell/ T_\ell$, which has trivial $G_\ell$-action,
with the subgroup of $E'(K)$ of order $\ell$ that is the kernel of the isogeny 
$\phi: E'\to E$ dual to $\widehat\phi: E\to E'=E/T_\ell$.
If $Q\in E(\overline K)$ satisfies $\ell Q=P$, then 
$P'=\widehat\phi(Q)$ is in $E'(K)$, as it is the image of any point in the Galois orbit 
$G_\ell\cdot Q\subset Q+T_\ell$.
Moreover, we have $\phi(P')=\phi\widehat\phi(Q)=\ell Q=P$, so Condition C 
of Theorem \ref{thm:lneverprim} is satisfied.

Conversely, each of the Conditions A, B, and C guarantees that $P\in E(K)$ is locally
$\ell$-imprimitive. For $A$ and $B$ this is immediate.
If Condition C holds, we have an $\ell$-isogeny $\phi: E'\to E$ defined over $K$ and
a point $P'\in E'(K)$ with $\phi(P')=P$.
Pick $Q'\in E'(\overline K)$ with $\ell Q'=P'$ and put $Q=\phi(Q')\in E(\overline K)$.
Writing $\widehat\phi: E\to E'$ for the dual isogeny, we are in the situation of Example 
\ref{Serre-example}, and we have 
\[
\ell Q=\phi(\ell Q')=\phi(P')=P\qquad \text{and} \qquad
\widehat\phi Q= \widehat\phi \phi Q'= \ell Q'=P'\in E'(K).
\]
As $Q$ is in the fibre $\widehat\phi^{-1}(P')$, the $G_\ell$-action on $Q \bmod \langle P\rangle\in V_\ell$, 
which is by translation over $\ell$-torsion points, gives rise to a Galois orbit of length dividing $\ell$.
If the length is 1, then Condition A is satisfied, and $P\in E(K)$ is locally $\ell$-imprimitive.
If the length is $\ell$, then $G_\ell$ acts on $V_\ell$ by translation along the $K$-rational subgroup
$T_\ell=\ker\widehat\phi\subset U_\ell=E[\ell](\overline K)$, and the matrix representation of $G_\ell$ on $V_\ell$
with respect to the filtration $T_\ell\subset U_\ell\subset V_\ell$ is
\[
\rho_{E/K,\ell} \sim 
\begin{pmatrix}
* & * & * \\
0 & 1 & 0 \\
0 & 0 & 1
\end{pmatrix}.
\]
By Lemma \ref{ellprimrootatatell}, we find that we have 
$\ell | [E(k_\gothp):\langle {\bar P}\rangle ]$ for every prime 
$\gothp$ of good reduction of~$E$ that is of characteristic different from $\ell$,
so $P\in E(K)$ is locally $\ell$-imprimitive.
\end{proof}
\section{Locally 2-imprimitive points}
\label{S:Locally 2-imprimitive points}
\noindent
A \emph{non-trivial} locally $\ell$-imprimitive point on an elliptic curve $E/K$
is a non-torsion point $P\in E(K)$ for which Condition C of 
Theorem \ref{thm:lneverprim} holds, but not Conditions A or B.
If $P$ is such a point, $E$ admits a $K$-rational $\ell$-isogeny $E\to E'$, 
and the Galois representation $\bar\rho_{E, \ell}$ of $G_K$ on  
$U_\ell=E[\ell](\bar K)$  
is non-trivial, with image contained in a Borel subgroup of $\GL(U_\ell)$.

Let $P\in E(K)$ be a non-trivial locally 2-imprimitive point.
As a Borel subgroup of $\GL(U_2)\cong \GL_2(\FF_2)$ has order 2, 
the representation $\bar\rho_{E, 2}$ is a non-trivial quadratic character,
and as a Weierstrass model for $E$ we can take 
\begin{equation}
\label{00model}
E: y^2=x(x^2+ax+b) \qquad \hbox{with } b, d=a^2-4b\in K^* .
\end{equation}
Here $(0,0)$ is the $K$-rational point of order 2, and $\bar \rho_{E, 2}$ 
corresponds to the $2$-division field of $E$ over $K$, which equals $K(\sqrt d)$.
Addition by $(0,0)$ induces an involution 
\[
(x,y)\mapsto (x_1, y_1)=(b/x, -by/x^2)
\]
on the function field $K(E)=K(x,y)$ of $E$, and the invariant field 
$K(x+x_1, y+y_1)$ is the function field of the
2-isogenous curve $E'=E/\langle(0,0)\rangle$.
Choosing $u=x+x_1+a$ and $v=y+y_1$ as generators for $K(E')$, 
we obtain a Weierstrass model
\begin{equation}
\label{Eprime}
E': v^2=u(u^2-2au+d)  \qquad \hbox{with } d, d'=(-2a)^2-4d=16b\in K^*
\end{equation}
for $E'$ that is of the same form \eqref{00model}, 
and an explicit 2-isogeny $\varphi: E\to E'$ given by
\begin{equation}
\label{2-isogeny}
\varphi: (x,y)\longmapsto (u, v) = (x+x_1+a, y+y_1)=
\left( \frac {y^2}{x^2}, (1-\frac{b}{x^2})y \right) .
\end{equation}
An affine point $(u,v)\in E'(K)$ different from $(0,0)$ is in the image of $E(K)$ under this isogeny 
if and only if $u\in K^*$ is a square.
The point $(0,0)$ is in the image if and only if $d$ is a square in $K^*$, 
which amounts to saying that $E(K)$ has full 2-torsion.
This not the case for our $E$. 

As $E'$ is again of the form \eqref{00model}, 
one sees that the isogeny $\widehat\varphi: E'\to E$ dual to $\varphi$ is given by
\begin{equation}
\label{dual-2-isogeny}
\widehat\varphi: (u,v)\longmapsto
\left( \frac {v^2}{4u^2}, (1-\frac{b}{u^2})\frac{v}{8} \right) .
\end{equation}

\begin{proof}[Proof of Theorem \ref{thm:2neverprim}]
Let $E/K$ be an elliptic curve with $\#E(\overline K)=2$.
We take $(0,0)$ as the point of order 2 in a Weierstrass model of $E$
in order to obtain an equation $E: y^2=x(x^2+ax+b)$ as in \eqref{00model}, 
with $d=a^2-4b\in K^*\setminus {K^*}^2$.     
Any quadratic twist of $E$ over $K$ is of the form 
\[
E_D: y^2=x(x^2+aDx+bD^2)
\]
for some $D\in K^*$ that we may still rescale by squares in $K^*$, 
and we can define the 2-isogeny
$\varphi: E_D \to E_D'=E/\langle (0,0)\rangle$ as above 
by replacing $(a,b)$ in \eqref{00model}, \eqref{Eprime}, 
and~\eqref{2-isogeny} by 
$(aD, bD^2)$.
Any point $P\in E_D(K)$ satisfying Condition C from 
Theorem \ref{thm:lneverprim} is in the image $\widehat\varphi(E_D'(K))$
of the isogeny $\widehat\varphi: E_D'\to E_D$ dual to $\varphi$.
This means that its $x$-coordinate is a square in $K^*$, which we can take to be 1
after rescaling the model of $E$ over $K$.
Thus, the twists that are relevant for us are those for which the point
$P=(1,\pm Y)\in E_D(\bar K)$ is $K$-rational.

We want $(D,Y)$ to be a $K$-rational point on
the conic $Y^2=1+aD+bD^2$ different from $(0,\pm 1)$.
Such points are obtained as the second point of intersection 
of this conic with the line $Y-1=\lambda D$ through $(0,1)$ with slope $\lambda \in K$.
We find that the twists $E_\lambda=E_{D_\lambda}$ of $E$ by 
\begin{equation}
\label{Dlambda}
D_\lambda=\frac{a-2\lambda}{\lambda^2-b} \qquad     
\text{with }\lambda\in K\setminus
\{a/2,\pm\sqrt b\}
\end{equation}
come by construction with a $K$-rational point
\begin{equation}
\label{Plambda}
P_\lambda=(1,Y_\lambda)=
\left(1,\frac{\lambda^2-a\lambda+b}{\lambda^2-b}\right)
\in E_\lambda(K).
\end{equation}
We can find $K_2=K(\frac{1}{2} P_\lambda)$
by solving $2Q=\widehat\varphi(\varphi Q) = P_\lambda$ in $E_\lambda(\overline K)$.
The equation $\widehat\varphi(u,v)=P_\lambda$
has 2 solutions in $E'_\lambda(K)$, since we chose for 
the first coordinate of $P_\lambda$ the square value 1. 
By \eqref{dual-2-isogeny}, these are the $K$-rational points $(u,v)$ different from $(0,0)$ 
that have $v^2=4u^2$ and satisfy the equation 
\[
v^2= u(u^2-2aD_\lambda u+ dD_\lambda^2)
\]
defining $E'_\lambda$. 
With $d=a^2-4b$, the resulting equation
\[
u^2-2aD_\lambda u -4u + dD_\lambda^2 = (u-aD_\lambda-2)^2 - 4 (1+aD_\lambda+bD_\lambda^2)=0
\]
for $u$ yields two solutions
$
u_1, u_2 = aD_\lambda+2 \pm Y_\lambda  \in K
$
with product $u_1u_2=dD_\lambda^2$.
Writing $D_\lambda$ and $P_\lambda$ as in 
\eqref{Dlambda} and \eqref{Plambda},
we find $aD_\lambda+2 - Y_\lambda=d/(\lambda^2-b)$, so
the minimal extension $K\subset K_2$ for which the 
corresponding points are in $\varphi[E(K_2)]$ is 
\[
K_2 = K(\textstyle\frac{1}{2} P_\lambda)= K(\sqrt {u_1}, \sqrt{u_2})
     = K(\sqrt d, \sqrt{\lambda^2 - b}).
\]
If we avoid the values $\lambda\in K$
for which $\lambda^2 - b$ is either a square or $d$ times a 
square in $K$--this includes the 3 values excluded in \eqref{Dlambda}--then 
$K_2$ is a biquadratic extension of $K$ which, unsurprisingly, 
has the 2-division field $K(\sqrt d)$ of $E/K$
as one of its quadratic subextensions.
For these~$\lambda$, our matrix representation from
\eqref{mod-ell-rep} and~\eqref{matrixrep} 
of $G_2=\Gal(K_2/K)$ becomes
\[
\Gal(K_2/K)= 
\left\{ \begin{bmatrix}
1 & x & y \\
0 & 1 & 0 \\
0 & 0 & 1 
\end{bmatrix}: x,y\in\FF_2  \right\},
\]
which implies that $P_\lambda\in E_\lambda(K)$ is globally 2-primitive, but locally $2$-imprimitive. 

There is still the possibility that $P_\lambda$, though globally 2-primitive, is a torsion point of even order $m>2$.
Examples: 
the point $(1,-3)$ on $y^2=x(x^2-7x+3)$ has order 4, and 
the point $(1,1)$ on $y^2=x(x^2+3x-3)$ has order 6.
However, for fixed~$K$ there are only finitely many possibilities
for $m$ by Merel's theorem. 
For every given even $m$, the point $P_\lambda$ is of order $m$ 
if and only if the $m$-th division polynomial
$\psi_m(x)=y^{-1}f(x, E_\lambda)$ vanishes at $x=1$.
This happens for only finitely many values of~$\lambda$,
as $f(1,E_\lambda)$ is a non-constant rational function of $\lambda$ if we fix $a,b \in K$.
\end{proof}

\noindent 
Our understanding of local 2-imprimitivity is more or less complete, as
every non-trivial 2-locally imprimitive point on an elliptic curve arises
as in the construction in the proof of Theorem \ref{thm:2neverprim}.
Indeed, the hypothesis $\#E[2](\overline K)=2$ implies that $E$ has a model
as in \eqref{00model}, and as the $x$-coordinate of a point $P$ satisfying Condition C 
of Theorem \ref{thm:lneverprim} is a square, the model can be scaled over $K$ to have
$P=(1,y)\in E(K)$.

\section{Locally 3-imprimitive points}
\label{S:Locally 3-imprimitive points}

\noindent
By Theorem \ref{thm:lneverprim}, every pair $(E,P)$ of an
elliptic curve $E/K$ with a non-trivial locally $\ell$-imprimitive point $P\in E(K)$ arises as the $\ell$-isogenous image of a $K$-rational 
curve-point-pair $(E',P')$ for which the kernel $E'\to E$ 
is generated by an $\ell$-torsion point $T\in E'(K)$.
In this situation,  
the Galois representations of $G_K$ on the $\ell$-torsion subgroups  
of $E'$ and $E=E'/\langle T\rangle$ are, 
with respect to a suitable basis, of the form 
\begin{equation}
\label{mod-ellreps}
\rho_{E'/K,\ell} \sim 
\begin{pmatrix}
1 & *  \\
0 & \omega_\ell  
\end{pmatrix}
\qquad\text{and}\qquad
\rho_{E/K,\ell} \sim 
\begin{pmatrix}
\omega_\ell & *  \\
0 & 1  
\end{pmatrix}.
\end{equation}
Here $\omega_\ell$ is the cyclotomic character corresponding to 
the extension $K\subset K(\zeta_\ell)$.
For $\ell > 2$, the $K$-rationality of $\ell$-torsion points of $E$
is not preserved under twisting of $E$, so there is no direct analogue of
Theorem \ref{thm:2neverprim} for $\ell\ne2$.
In this section we focus on the case $\ell=3$.
\begin{lemma}
\label{K3-cuberootdisc}
Let $E/K$ be an elliptic curve of discriminant $\Delta_{E}$
for which the Galois representation
$\rho_{E/K, 3}$ on $U_3=E[3](\overline K)$ is of one of the two forms in 
\eqref{mod-ellreps}.
Then the $3$-division field of $E$ over $K$ equals the splitting field 
of the polynomial $X^3-\Delta_{E}$.
\end{lemma}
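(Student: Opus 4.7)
The plan is to pass to the base field $K':=K(\zeta_3)$, put $E$ in Tate normal form with a rational $3$-torsion point, and match the two extensions by a direct Cardano computation on the nontrivial factor of the $3$-division polynomial. Two reductions make this feasible. First, the Weil pairing gives $K(\zeta_3)\subseteq K(E[3])$, so the desired equality reduces to $K'(E[3])= K'(\sqrt[3]{\Delta_{E}})$ as extensions of $K'$. Second, the cyclotomic character $\omega_3$ becomes trivial over $K'$, so in both cases of \eqref{mod-ellreps} the image of $\rho_{E/K',3}$ lies in the unipotent subgroup of $\GL_2(\FF_3)$ of order $3$; in particular, $E(K')$ contains a point $T$ of order $3$ (in case A this holds already over $K$; in case B the Galois-stable subgroup $T\subset E[3]$ with character $\omega_3$ becomes pointwise rational over $K'$).

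I would then translate so that $T=(0,0)$, placing $E$ in Tate normal form $y^2+a_1xy+a_3y=x^3$ over $K'$. A direct calculation gives the discriminant $a_3^3(a_1^3-27a_3)$, which equals $\Delta_{E}$ up to a twelfth power in $K'^{*}$ (from the coordinate change), hence the same class modulo cubes. The $3$-division polynomial factors as $\psi_3(x)= x\cdot g(x)$ with $g(x)=3x^3+a_1^2x^2+3a_1a_3x+3a_3^2$. Applying Cardano's formula to the depressed form of $g$ and setting $\Delta_0=a_1^3-27a_3$, the two quantities appearing under the outer cube roots turn out to be rational multiples of $-\Delta_0^2$ and $-a_1^3\Delta_0$, whose cube roots both lie in $K'(\sqrt[3]{\Delta_0})$ since $\zeta_3, a_1\in K'$. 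This exhibits the three non-trivial $3$-torsion $x$-coordinates as explicit $K'$-linear combinations of $1, \sqrt[3]{\Delta_0}, \sqrt[3]{\Delta_0^2}$; the splitting field of $g$ over $K'$ is thus $K'(\sqrt[3]{\Delta_0})=K'(\sqrt[3]{\Delta_{E}})$.

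The splitting field of $g$ lies in $K'(E[3])$, so only the reverse inclusion remains. Each $3$-torsion $y$-coordinate lies in an at most quadratic extension of its $x$-coordinate, so $K'(E[3])$ is multiquadratic over the splitting field of $g$, of degree a power of $2$. But by hypothesis $[K'(E[3]):K']$ divides the order $3$ of the unipotent subgroup. The common divisors of a power of $2$ and $3$ reduce to $1$, forcing $K'(E[3])=K'(\sqrt[3]{\Delta_{E}})$ and hence $K(E[3])=K(\zeta_3,\sqrt[3]{\Delta_{E}})$, as claimed.

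The main obstacle is the Cardano computation. Verifying that the radicals line up so cleanly comes down to the algebraic identity $4a_1^3\Delta_0-(a_1^3+\Delta_0)^2=-(27a_3)^2$, which also forces $\mathrm{disc}(g)=-(a_3\Delta_0)^2/27$, so that $\sqrt{\mathrm{disc}(g)}\in K(\zeta_3)$; this last consistency check matches the Weil-pairing containment $K(\zeta_3)\subseteq K(E[3])$ and indicates that the computation is on the right track.
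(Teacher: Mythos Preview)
Your argument is correct and takes a genuinely different route from the paper's. The paper works directly over $K$ via the cubic resolvent $\delta_3$ of the quartic $3$-division polynomial $\psi_3$: under the hypothesis on $\rho_{E/K,3}$ the two natural surjections $H_3\to\Gal(\psi_3)\to\Gal(\delta_3)$ are isomorphisms (because $-1\notin H_3$ and the image of $H_3$ in $S_4=\GL_2(\FF_3)/\{\pm1\}$ meets the Klein four-group trivially), after which a classical identity of Serre expressing the cube roots of $\Delta_E$ as $b_4-3\alpha_i$, with $\alpha_i$ the roots of $\delta_3$, gives $K(E[3])=\Split_K(\delta_3)=\Split_K(X^3-\Delta_E)$ immediately. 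Your approach instead base-changes to $K'=K(\zeta_3)$ to force a rational $3$-torsion point and a Deuring normal form, and then solves the residual cubic by Cardano. The paper's route is shorter because it invokes Serre's identity as a black box; yours is more elementary and self-contained, and the Cardano radicals $-\Delta_0^2/729$ and $-a_1^3\Delta_0/729$ that you compute amount to a rediscovery of that identity in these coordinates. One small point worth making explicit in your write-up: the \emph{equality} $\Split_{K'}(g)=K'(\sqrt[3]{\Delta_0})$ (rather than mere containment) follows because the Lagrange resolvent $r_0+\zeta_3 r_1+\zeta_3^2 r_2$ of the three roots recovers $-\alpha^2/3$, and hence $\alpha=\Delta_0/\alpha^2$ lies in $\Split_{K'}(g)$.
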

\begin{proof}
Let $H_3=\Gal(K(E[3])/K)\subset \GL(U_3)\iso \GL_2(\FF_3)$ be the image of 
$\rho_{E/K, 3}$, and denote by $\psi_3=\prod_{i=1}^4 (X- x_i)\in K[X]$ 
the 3-division polynomial of $E$ over~$K$.
The quartic polynomial $\psi_3$ comes with a Galois resolvent
$\delta_3\in K[X]$, a cubic having 
\[
\alpha_1=x_1x_2+x_3x_4, \qquad
\alpha_2=x_1x_3+x_2x_4, \qquad
\alpha_3=x_1x_4+x_2x_3
\]
as its roots. 
Under the permutation action of $\GL(U_3)/\langle -1\rangle=S_4$ on the roots of~$\psi_3$,
the normal subgroup $V_4\triangleleft S_4$ of order 4 fixes 
each of these 3 roots $\alpha_i$.
The two natural surjections of Galois groups
\begin{equation}
\label{Galoissurj}
H_3 \to \Gal(\psi_3)\to \Gal(\delta_3)
\end{equation}
are isomorphisms as they
arise as a restriction to suitable subgroups of the generic
group theoretical maps 
\[
\GL(U_3)\to \GL(U_3)/\langle-1\rangle=S_4 \to S_4/V_4=S_3.
\]
More precisely, the first surjection in \eqref{Galoissurj}
is injective as we have $-1\notin H_3$
for $H_3$ as in \eqref{mod-ellreps},
and the second is because we have $\Gal(\psi_3)\cap V_4= 1 $ in $S_4$:
the $x$-coordinate of the 3-torsion point spanning the Galois invariant subspace corresponding to the first column of the matrices 
is fixed by $\Gal(\psi_3)$.
Viewing $H_3$ as $\Gal(\delta_3)$, we may finish the proof by quoting
a classical formula \cite{Serre}*{p.~305} that expresses the three 
cube roots of $\Delta_E$ as $b_4-3\alpha_i$ ($i=1,2,3$), with
$b_i\in K$ a coefficient from the Weierstrass model of $E$.
This yields $K(E[3])=\Split_K(g)=\Split_K(X^3-\Delta_{E})$.
\end{proof}
\noindent
From Lemma \ref{K3-cuberootdisc}, we see that
for the representations in \eqref{mod-ellreps}, the subgroup
$\binom{1\ *}{0\ 1}$
corresponds to the extension 
$K(\zeta_3)\subset K(\zeta_3,\root 3 \of \Delta)$ 
for the discriminant values $\Delta=\Delta_{E'}$ and $\Delta_E$.

We can write the curve $E'$ in Deuring normal form \cite{Husemoller}*{p.\ 89} as
\begin{equation}
\label{Deuringnf-ab}
E': y^2+axy+by=x^3 \qquad \hbox{with } 
    a,b \in K \hbox{ and } \Delta_{E'}=b^3(a^3-27b)\in K^*.
\end{equation}
Here $T=(0,0)\in E'(K)$ is the point of order 3, and
the quotient curve $E=E'/\langle T\rangle$ has 
Weierstrass equation 
\begin{equation}
\label{3-isogcurve-ab}
E: y^2+axy+by=x^3-5abx-(a^3+7b)b,
\end{equation}
with the explicit formula for the 3-isogeny 
$\varphi_3: E'\to E=E'/\langle T\rangle$  being given by 
\begin{equation}
\label{explicit3isog}
\varphi_3(x,y)= \left(
\frac{x^3+abx+b^2}{x^2},
    \frac{y(x^3-abx-2b^2)-b(ax+b)^2}{x^3}\right).
\end{equation}
As $E$ has discriminant $\Delta_E=b(a^3-27b)^3$, the 
3-division field $K(E[3])$ over $K$ equals $K(\zeta_3)$  
if and only if $b\in K^*$ is a cube and different from $(a/3)^3$.

If $a$ is non-zero, we can rescale 
$(x, y)\mapsto (a^2x, a^3y)$ and simplify \eqref{Deuringnf-ab} to 
\begin{equation}
\label{Deuringnf}
E_b': y^2+xy+by=x^3.
\end{equation}
For $K$ a number field, we have infinitely many pairwise different
3-isogenous images $(E_b, P_b)=\varphi_3[(E_b', P_b')]$ for which
$P_b\in E_b(K)$ is non-trivially locally 3-imprimitive.
\begin{theorem}
\label{ell=3}
For $K$ a number field, take $b=b(X)=(1-X-X^2)/X\in K(X)$ and 
define the associated elliptic curve over $K(X)$ as
$$
E_{b}: y^2+xy+by=x^3-5bx-(1+7b)b.
$$  
Then for infinitely many $t\in K^*$, the specialization $E_{b(t)}$
of $E_b$ is an elliptic curve over $K$ for which
\begin{equation}
\label{pointPt}
    \left(\frac{(t^2+t)(t^2-1)+1}{t^2},
    \frac{(1-t^2)\left(t^4+2 t^3+t-1\right)}{t^3}\right)
\in E_{b(t)}(K)
\end{equation} 
is a non-trivial locally 3-imprimitive point.
\end{theorem}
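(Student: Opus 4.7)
The plan is to exhibit a $K(X)$-rational section on the curve $E_b'$ in Deuring normal form, transport it through $\varphi_3$ to obtain the asserted point on $E_b$, and then invoke specialization to produce infinitely many $t \in K^*$ where the three required conditions (non-torsion, and failure of Conditions A and B of Theorem \ref{thm:lneverprim}) all hold. The choice $b = b(X) = (1 - X - X^2)/X$ is engineered precisely so that $(1, X)$ lies on $E_b'\colon y^2 + xy + by = x^3$ over $K(X)$: indeed $X^2 + X + bX = X^2 + X + (1 - X - X^2) = 1 = 1^3$. Applying the explicit 3-isogeny \eqref{explicit3isog} with $a = 1$ at $(1, X)$ yields $\varphi_3(1,X) = \bigl(1 + b + b^2,\; X(1 - b - 2b^2) - b(1 + b)^2\bigr)$, and routine simplification with $b = (1-X-X^2)/X$ produces the coordinates displayed in \eqref{pointPt}. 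For every $t \in K^*$ outside the finite set where $b(t) \in \{0, 1/27\}$ (the locus where $\Delta_{E_{b(t)}} = b(t)(1-27b(t))^3$ vanishes), the specialization $E_{b(t)}$ is an elliptic curve and $P_t = \varphi_3((1,t)) \in \varphi_3\bigl(E'_{b(t)}(K)\bigr)$, which directly places Condition~C in force for~$P_t$.

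Non-torsion of $P_t$ for $t$ outside a thin subset of $K$ follows from Silverman's specialization theorem, provided the generic section $P = \varphi_3(1,X) \in E_b(K(X))$ is of infinite order; this is verified by exhibiting a single specialization, e.g.\ $t_0 = 2 \in \Que \subset K$, which gives $b(t_0) = -5/2$ and $P_{t_0} = (19/4, -99/8)$, and then confirming infinite order either by a canonical height computation or, using Merel's uniform torsion bound, by checking that sufficiently many multiples $n P_{t_0}$ are nonzero. Condition~B fails outside a finite exceptional set: by Lemma \ref{K3-cuberootdisc} the 3-division field $K(E_{b(t)}[3])$ is the splitting field of $T^3 - b(t)(1-27b(t))^3$, and triviality of this extension forces $b(t) \in (K^*)^3$. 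Writing $b(t) = s^3$ as $t^2 + t(1+s^3) - 1 = 0$ and eliminating $t$ through the quadratic formula shows that such $t$ come from $K$-rational points on the hyperelliptic curve $w^2 = s^6 + 2s^3 + 5$, which has genus $2$ since $s^6 + 2s^3 + 5$ has six distinct roots over $\overline K$; Faltings' theorem then gives finiteness of the $K$-rational locus, so Condition~B fails for all but finitely many $t$.

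The main obstacle is Condition~A. When $\zeta_3 \notin K$, one checks that $K$-rational 3-torsion on $E_{b(t)}$ can only exist if $b(t) \in (K^*)^3$ (since otherwise $[K(E_{b(t)}[3]):K] = 6$ with Galois group $S_3$, whose action on $E_{b(t)}[3]$ has no nonzero fixed vector), so outside the finite exceptional set from Condition~B there is no $K$-rational 3-torsion and Condition~A fails automatically. When $\zeta_3 \in K$, the kernel $T_3 = \ker\widehat{\varphi}_3$ is always $K$-rational, and Condition~A reduces via the identity $\varphi_3\widehat{\varphi}_3 = [3]$ to asking whether $(1,t) \in \widehat{\varphi}_3\bigl(E_{b(t)}(K)\bigr)$. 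To rule this out for infinitely many $t$, one shows that the generic section $(1,X)$ does not lie in $\widehat{\varphi}_3\bigl(E_b(K(X))\bigr)$, equivalently that $P = \varphi_3(1,X) \notin 3\, E_b(K(X))$, and then invokes a divisibility-specialization argument (a Hilbert-irreducibility-type statement for the Mordell--Weil group mod 3): thinly many $t$ can acquire a new $K$-rational 3-division point. Generic non-divisibility is in turn established by checking, at some explicit $t_0 \in K$, that $P_{t_0}$ is not divisible by $3$ in $E_{b(t_0)}(K)$ — for instance by running a $\widehat{\varphi}_3$-descent, whose obstruction lies in $K^*/(K^*)^3$ via the Kummer map attached to the isogeny, and evaluating the image of $(1,t_0)$ from the Weierstrass data. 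Combining the three failures yields infinitely many $t \in K^*$ for which $P_t$ is a non-trivial locally 3-imprimitive point.
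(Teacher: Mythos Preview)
Your outline is workable but takes a more circuitous route than the paper, and contains one slip. The paper's proof rests on a single explicit computation that you bypass entirely: the 3-division field of the generic point $P_b\in E_b(K(X))$ is
\[
K(X)(\tfrac13 P_b)=K\bigl(\zeta_3,\sqrt[3]{X},\sqrt[3]{X^2+X-1}\bigr),
\]
from which one reads off that $\Gal\bigl(K(X)(\tfrac13 P_b)/K(X)\bigr)$ consists exactly of the matrices $\left(\begin{smallmatrix}\omega_3 & * & *\\ 0 & 1 & 0\\ 0 & 0 & 1\end{smallmatrix}\right)$. One appeal to Hilbert irreducibility then preserves this Galois group for all $t$ outside a thin set, and its shape simultaneously forces local 3-imprimitivity, global 3-primitivity (so Condition~A fails), and $E_{b(t)}[3]\not\subset E_{b(t)}(K)$ (so Condition~B fails). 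Your piecemeal treatment---Silverman for non-torsion, Faltings on the genus~2 curve $w^2=s^6+2s^3+5$ for Condition~B, a descent-and-specialize argument for Condition~A---can all be made rigorous, but is heavier and less transparent; in particular the Faltings step is overkill, since $b(t)\in(K^*)^3$ already cuts out a thin set of~$t$.

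The one genuine slip is your reduction of Condition~A when $\zeta_3\in K$. From $P_t=\varphi_3((1,t))\in 3E_{b(t)}(K)=\varphi_3\widehat\varphi_3\bigl(E_{b(t)}(K)\bigr)$ you can only conclude that $(1,t)\in\widehat\varphi_3\bigl(E_{b(t)}(K)\bigr)+\ker\varphi_3$, and $\ker\varphi_3=\langle(0,0)\rangle$ need not lie in the image of $\widehat\varphi_3$ when $E_{b(t)}$ lacks full $K$-rational 3-torsion. This is easily patched (check all three cosets, or carry out the Kummer computation directly), but it illustrates why the paper's uniform Galois computation is cleaner: global 3-primitivity of $P_t$ is simply the statement $[K(\tfrac13 P_t):K(E_{b(t)}[3])]=3$, which falls out of the explicit division field with no case analysis on whether $\zeta_3\in K$.
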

\begin{proof}
For the curve $E'_b$ in \eqref{Deuringnf} with $b\in K(X)$ as defined, 
the point $P'_b=(1, X)$ lies on $E'_b$ by the very choice of $b$:
it satisfies $X^2+X+bX=1$.
Under the 3-isogeny $\varphi_3: E'_b\to E_b$ from 
\eqref{explicit3isog} to the curve
$E_b$ obtained by putting $a=1$ in \eqref{3-isogcurve-ab},
it is mapped to 
$$
P_b=(1+b+b^2, (1-b-2b^2)X-b(1+b)^2)\in E_b(K(X)).
$$
Under the specialization $X=t\in K^*$, we obtain a point $P_{b(t)}$,
on the curve $E_{b(t)}$ defined over $K$ 
that is given by \eqref{pointPt}.
We are only interested in those 
specializations for which $E_{b(t)}$ is an elliptic curve.
As these are all $t\in K^*$ for which $b(t)\notin \{0, 1/27\}$, at most
4 `bad' values of $t$ are excluded.
Also, by the same argument as we gave for $P_\lambda$ in the case $\ell=2$,
there are only finitely many $t\in K^*$ for which $P_t$ is torsion point. 
These finitely many $t$ we also exclude as `bad' values.

We saw already that $E_b$ has 3-division field 
$K(\zeta_3, \root 3 \of b)$, and an explicit computation 
shows that the 3-division field of the point $P_b\in E_b(K(X))$ equals 
\[
K(X)(\textstyle\frac13 P_b)=K(\zeta_3, \root 3 \of b, \root 3 \of X)=
K(\zeta_3, \root 3 \of X, \root 3 \of {X^2+X-1}).
\]
Over $K(\zeta_3, X)$, the elements $X$ and $X^2+X-1$ have 
`independent' cube roots -- it suffices look at their ramification locus.
It follows that the Galois group of 
the 3-division field of $P_b$ over $K(X)$
may be described as
\[
\Gal(K(X)(\textstyle\frac13 P_b)/K(X))= 
\left\{ \begin{bmatrix}
\omega_3 & x & y \\
0 & 1 & 0 \\
0 & 0 & 1 
\end{bmatrix}: x,y\in\FF_3  \right\},
\]
with $\omega_3$ denoting the $\FF_3^*$-valued character corresponding to
the (possibly trivial) extension $K(X)\subset K(X, \zeta_3)$.
By Hilbert irreducibility, it follows that for infinitely many
$t\in K^*$ outside the finite set of `bad' values, 
the 3-division field of the point $P_{b(t)}\in E_{b(t)}(K)$ 
has the `same' Galois group over $K$,
making it into a point that is globally 3-primitive, but locally $3$-imprimitive.
As $E_{b(t)}$ does not have complete 3-torsion over $K$, we conclude that the point $P_{b(t)}$ given in \eqref{pointPt}
is a non-trivial locally 3-imprimitive point.
\end{proof}
\noindent
\begin{remark}
The construction in the proof of Theorem \ref{ell=3} excludes all specializations for which $b=b(t)\in K^*$ is a cube and 
the elliptic curve in \eqref{Deuringnf} has 3-division field $K(\zeta_3)$.
In this special case, we can also equip $E=E_b$  
with a non-trivial locally 3-imprimitive point
for infinitely many $b\in {K^*}^3$.
We first write $b=c^{-3}$ and transform the curve under
$(x,y)\mapsto (c^{-2}x, c^{-3}y)$ into $E':y^2+cxy+y=x^3$.
As in the previous case, we have $P'=(1,t)\in E'(K)$ 
for $c=(-t^2-t+1)/t$, and the image of $P'$ under the map 
\[
\varphi: E'\to E= E'/\langle(0,0)\rangle: y^2+cxy+y=x^3-5cx-(c^3+7)
\]
is the point $P=\varphi_3(P')=((-t^2 + t + 1)/t, (t^2 - 1)/t^2)\in E(K)$,
for which the 3-division field $K(\frac13P)$ is equal to  
$K(\zeta_3,\sqrt[3]t)$.
For almost all $t\notin {K^*}^3$, this makes $P$ into a 
globally 3-primitive point that is locally $3$-imprimitive. 
\end{remark}
\noindent
We conclude our discussion for $\ell=3$ with the remaining case in which
the curve $E'$ in \eqref{Deuringnf-ab} has $a=0$.
In this case $E'$ has $j$-invariant 0, and writing $c=b/2$
we may rescale the equation by $y\mapsto y-c$ to the more 
familiar shape $E': y^2=x^3+c^2$, 
with 3-torsion point $T=(0,c)$ and CM by $\Zee[\zeta_3]$.
We equip $E'$ with a $K$-rational point $(c, c\sqrt{c+1})$
by putting $c=s^2-1$ with $s\in K$.
This leads to a 1-parameter family of 3-isogenies 
\begin{align*}
    \varphi_3: E'_s: y^2= x^3 + (s^2-1)^2 &\tto  E_s: y^2= x^3 - 27 (s^2-1)^2 \\
    P'_s=(s^2-1, s(s^2-1))&\longmapsto P_s=(s^2+3, s(s^2-9))
\end{align*}
between CM-curves with $j$-invariant 0.
In this case the 3-division field of $E_s$ over~$K$ is 
$K(\zeta_3, \root 3 \of {2(s^2-1)})$, and the 3-division field of $P_s$ over $K$ equals
\[
K(\zeta_3, \root 3 \of {2(s^2-1)}, \root 3 \of {4(s+1)}).
\]
Again, for $s\in K$ outside a thin set, the point $P_s\in E_s(K)$
is globally 3-primitive but locally 3-imprimitive.
\section{Further examples}
\label{S:Further examples}

\noindent
Over $K=\Que$, non-trivial locally $\ell$-imprimitive points 
can only occur for primes $\ell\le 7$.
Examples for $\ell=5$ and also $\ell=7$
can be found by the techniques that we employed for $\ell=3$, but the 
formulas and resulting curves rapidly become less suitable for presentation on paper.

\subsection{Curves with a locally 5-imprimitive point}
In this case, we start from Tate's normal form
\begin{equation}
\label{Tate-5}
E': y^2+(1-c)xy-cy=x^3-cx^2
\end{equation}
that parametrises elliptic curves with $(0,0)$ as a $5$-rational point
(see Kulesz~\cite{Kulesz}). 
It has further points $(0,c)$, $(c,0)$ and $(c, c^2)$ of order 5,
and its discriminant equals
$\Delta_{E'}=c^5 \left(c^2-11 c-1\right)$.
%
Using V\'elu's formula \cite{Velu} or invoking Pari-GP,
we compute the Weierstrass equation for the 5-isogenous curve 
$E=E'/\langle(0,0)\rangle$
\begin{align*}
E=E_c: \quad y^2+&(1-c)xy-cy= \\
&x^3-cx^2 -5 c (c^2+2 c-1)x-
c (c^4+ 10 c^3 - 5 c^2+ 15 c -1),
\end{align*}
and also the explicit 5-isogeny $\varphi_5: E'\rightarrow E$. 
The discriminants involved are
$\Delta_{E'}=c^5(c^2-11c-1)$ and $\Delta_{E}=c(c^2-11c-1)^5$, 
much like we saw for $\ell=3$.
The 5-torsion representations
of $E'$ and $E$ are as in \eqref{mod-ellreps}, 
and even though the proof of Lemma \ref{K3-cuberootdisc}
for $\ell=3$  does not generalize to $\ell=5$, we found by 
a direct calculation that the 5-division fields are 
$K(E'[5])=K(\zeta_5,\sqrt[5]{c^2-11c-1)})$ and
$K(E[5])=K(\zeta_5,\sqrt[5]{c})$: 
generated over $K(\zeta_5)$ by the 5-th root of the discriminant.

We can equip $E'$ with a $K$-rational point $P_t'=(t,t)$ by 
putting $c=t(2-t)$, and compute its image 
$P_t=\varphi_5(P')\in E_{t(2-t)}(K)$ as

\medskip
\centerline{$
P_t= \big(\frac{2 t^4-8 t^3+11 t^2-6 t+2}{(t-1)^2} ,
-\frac{t^8-7 t^7+19 t^6-23 t^5+4 t^4+23 t^3-31 t^2+19 t-4}{(t-1)^3}\big).
$}
\medskip
\noindent
The corresponding 5-division field of $P_t$ is
$$
\textstyle
K(\frac15 P_t)=K(\zeta_5, \root 5 \of t, \root 5 \of {t-2}).
$$
If this is an extension of degree 5 of 
$K(\zeta_5, \root 5 \of c)=K(\zeta_5, \root 5 \of {t(t-2)})$, then 
$P_t$ is a globally 5-primitive but locally 5-imprimitive point 
in $E_{t(2-t)}(K)$.

\begin{example}
Take $K=\Que$.
For $t=1$ the point $P_t$ above is the zero point as 
$P'_t$ is 5-torsion,
for $t=2$ we have $c=0$ and $E$ is singular, while for
$t=3$ and 4 we encounter the `accidents' $t=-c$ and $tc=2^5$
leading to points $P_t\in 5E_{t(2-t)}(\Que)$.
For $t=5$ we obtain the point $P_5=(497/16, -73441/64)$ on 
\[
E_{-15}: y^2+16xy+15y=x^3+15x^2+14550x+232860,
\]
which is the curve 5835.c2 in the LMFDB-database.
Note that for $c=-15$ we have 
\[
c(c^2-11c-1)=-5835=-3\cdot5\cdot389.
\]
The locally 5-imprimitive point $P_5$ is a generator of 
$E_{-15}(\Que)\iso\Zee$.
In fact, $P_t$ will be globally 5-primitive but locally 5-imprimitive
in $E_{t(2-t)}(\Que)$ for all $t\in\Zee_{\ge5}$ that are not a fifth power
or a fifth power plus 2, as for these $t$ the subgroup of $\Que^*/{\Que^*}^5$
generated by $t$ and $t-2$ has order 25.
\end{example}

\subsection{Curves with a locally 7-imprimitive point}
Again we start from the Tate's normal equation
$$E': y^2+(1-c)xy-by=x^3-bx^2$$
but now we do not impose $b=c$ as for $\ell=5$, but instead
$$c=d^2-d\quad\text{ and } b=d^3-d^2.$$
The curve $E'=E_d'$ parametrizes \cite{Kulesz} elliptic curves 
with $(0,0)$ as point of order 7.
A Weierstrass equation for the 7-isogenous curve $E=E'/\langle(0,0)\rangle$ is
$$E: y^2+(1-c)xy-by=x^3-bx^2-5 \left(2 b^2+b \left(c^2-3 c-2\right)+c \left(c^2+4 c+1\right)\right)x$$
$$-b^2 \left(12 c^2+c+24\right)-6 b^3+b \left(-c^4+9 c^3+46 c^2+24 c+2\right)-$$
$$c \left(c^4+16 c^3+36 c^2+16 c+1\right).$$  
It has discriminant $\Delta_E=d(d-1)(d^3-8d^2+5d+1)^7$, and  
this time we find its 7-division field to be 
$
K(E[7])=K(\zeta_7,\sqrt[7]{d(d-1)^2}).
$ 

We equip $E'$ with a $K$-rational point $P_t'=(d^2t,d^3t)$
by putting $d=d(t)=({t+1)/(t^2-t+1})$.
The image of $P_t'$ under the 7-isogeny $E'\to E$ is
$$
P_t=\left(-\frac{C(t)}{(2 t-1)^2 (t-1)^2 \left(t^2-t+1\right)^4},\frac{D(t)}{(t-1)^3 (2 t-1)^3 \left(t^2-t+1\right)^6}\right)\in E(K)
$$ 
for certain polynomials $C(t)$ and $D(t)$ in $\Zee[t]$ 
of degree 12 and 18.
In terms of~$t$, the 7-division field is
$K(E[7])=K(\zeta_7, \root 7 \of {t^2(t+1)(t-2)^2(t^2-t+1)^4})$, 
and the 7-division field of $P_t$ is
\[
K(\textstyle \frac17 P_t)=
K(E[7])\left(\root 7\of {\frac{t(t+1)}{t-2}}\right)=
K\left(\zeta_7, \root 7 \of {\frac{t(t^2-t+1)}{(t+1)}},
           \root 7 \of {\frac{t(t+1)}{(t-2)}}\right).
\]
The point $P_t$ is a globally 7-primitive but 
locally 7-imprimitive point when the extension
$K(\zeta_7)\subset K(\frac17 P_t)$ has its generic degree $7^2$.

\begin{example}
Take $K=\Que$.
For $t=1$ the point $P_t$ above is the zero point as $P'_t$ is 
7-torsion, and for $t=2$ the curve $E'$ is singular.
For $t=3$ and $d=\frac47$ however we obtain the point 
$P_3=(286019/490^2, 15951227/490^3)$ on 
\[
\textstyle
E: y^2+\frac{61}{7^2}xy+\frac{48}{7^3}y=
x^3+\frac{48}{7^3}x^2-\frac{774780}{7^7}x-\frac{1047829260}{7^{11}},
\]
which is the curve 20622.j1 with minimal model 
\[
E_0: y^2+xy=x^3-5455771x-5039899603, 
\]
in the LMFDB-database.
Our locally 7-imprimitive point $P_3$ is a generator of $E(\Que)\iso\Zee$.
On $E_0$ the corresponding generator is $(328219/10^2, 109777927/10^3)$. 
\end{example}
\section{A composite level obstruction}
\label{S:A level 6 obstruction}

So far we have focused on non-trivial obstructions to
local primitivity at prime level $\ell$, as this is a new
phenomenon in the elliptic primitive root case III that 
does not arise in the multiplicative primitive root case I 
and the cyclic reduction case II.

In all three cases, there exist obstructions of different nature 
at composite levels that arise from the \emph{entanglement} between
finitely many of the corresponding division fields $K_\ell$.
These obstructions do not arise over $K=\Que$, and most examples 
in the cases I and II are created by base changing to a well-chosen finite extension of 
the fields of definition $\Que(x)$ and $\Que(j_E)$.
Again, case III is different here, as entanglement obstructions
already occur over $\Que$.
In this Section we construct a level 6 obstruction.

Let $E/K$ be an elliptic curve with $\#E[2](K)=2$, and 
$P\in E(K)$ a point of infinite order.
Then the 2 division field $K(E[2](\overline K))$ is a quadratic extension of $K$.
Assume that the 2-division field $K(\frac12 P)$ of $P$ is of maximal degree 4 over it.
Then $G_2=\Gal(K(\frac12 P)/K)$ is a dihedral group of order 8 for which 
the matrix representation \eqref{matrixrep} 
on $V_2=\langle\frac12 P\rangle/\langle P\rangle$ has the form
\begin{equation}
\label{G2}
G_2=
\left\{ \begin{bmatrix}
1 & a & c \\
0 & 1 & b \\
0 & 0 & 1 
\end{bmatrix}: a,b,c \in\FF_2  \right\}\subset \GL_3(\FF_2).
\end{equation}
There is a unique subfield of $L\subset K(\frac12 P)$ with Galois group 
over $K$ isomorphic to the Klein 4-group $V_4=C_2\times C_2$, 
and we can view $a$ and $b$ in the
matrix representation \eqref{G2} of $G_2$ as $\FF_2$-valued quadratic characters 
on $G_2$ that generate the character group of 
the quotient $\Gal(L/K)\iso V_4$ of $G_2$.

For a prime $\gothp\nmid 2\Delta_E$, the point $P$ generates 
a subgroup of odd index in $E(k_\gothp)$ if and only if for its
Frobenius $\Frob_{\gothp,2}\in G_2$, viewed as a matrix as in 
\eqref{G2}, the endomorphism $(\Frob_{\gothp,2}-\id_2): V_2\to V_2$ 
has $\FF_2$-rank at least 2  (Lemma \ref{ellprimrootatatell}).
We obtain the criterion
\begin{equation}
\label{ab-cond}
2\nmid [E(k_\gothp):\langle \bar P\rangle]
\quad \Longleftrightarrow \quad 
a(\Frob_{\gothp,2})=b(\Frob_{\gothp,2})=1\in \FF_2.
\end{equation}
More precisely, $a(\Frob_{\gothp,2})=1$ implies that $E(k_\gothp)$ does not have full 2-torsion, 
and $b(\Frob_{\gothp,2})=1$ implies that $P$ is not only not in $2E(k_\gothp)$,
but also not a 2-isogenous image as in Condition C of Theorem \ref{thm:lneverprim}.

Suppose further that $E$ has a $K$-rational 3-torsion subgroup $T$, and let
$\varphi_3:E'\to E$ be the isogeny dual to the quotient map $\phi:E\to E'=E/T$.
Assume that the point $P$ is in $\varphi_3[E'(K)]$ but not in $3E(K)$.
Then the 3-division field $K(\frac13 P)$ of~$P$ has Galois group $G_3=\Gal(K(\frac13 P)/K)$
for which the matrix representation 
on $V_3=\langle\frac13 P\rangle/\langle P\rangle$ will `generically' be the group
\begin{equation}
\label{G3}
G_3=
\left\{ \begin{bmatrix}
d & e & f \\
0 & g & 0 \\
0 & 0 & 1 
\end{bmatrix}: \quad d,g \in \FF_3^*,  e,f \in\FF_3 \right\}\subset \GL_3(\FF_3)
\end{equation}
of order 36.
In this case $d$ and $g$ can be viewed as a quadratic characters $G_3\to \FF_3^*$, 
and another application of Lemma \ref{ellprimrootatatell} shows that for primes 
$\gothp\nmid 3\Delta_E$, we have 
\begin{equation}
\label{g-cond}
g(\Frob_{\gothp,3})=1\in \FF_3^*
\quad \Longrightarrow \quad 
3 | [E(k_\gothp):\langle \bar P\rangle].
\end{equation}
Thus, for primes $\gothp\nmid 6\Delta_E$, a necessary condition for $\bar P \in E(k_\gothp)$ to 
be an elliptic primitive root is that the three quadratic characters $a$, $b$ and $g$
occurring in \eqref{ab-cond} and \eqref{g-cond} do not vanish on the Frobenius automorphism
of $\gothp$ in $K_6=K(\frac16P)$.
In other words: the prime $\gothp$ has to be inert in the quadratic extensions 
$K_a$, $K_b$ and $K_g$ of~$K$ corresponding to these 3 characters.

Primes $\gothp$ satisfying the condition above exist if the quadratic extensions
$K_a$, $K_b$ and $K_g$ are linearly disjoint over $K$, but \emph{not} if they are the three
quadratic subfields of a $V_4$-extension $K\subset K_a K_b K_g$.
In the latter case, we have a splitting obstruction to local primitivity of $P$ in $K_6$ that does not 
exist in one of the smaller fields $K(\frac12 P)$ or $K(\frac13 P)$: it has level 6, but not 2 or 3,
making it an obstruction caused by \emph{entanglement} of division fields.
\begin{example}
An example is provided by the elliptic curve $E/\Que$ with label
12100.j1 in the LMFDB data base.
The curve $E$ has discriminant
$$\Delta_E=2^4\cdot5^9\cdot11^6,
$$
and if we take $(0,0)$ to be its unique $\Que$-rational 2-torsion point
it has Weierstrass model
$$
E: y^2=
x^3 + 605x^2 -3025x.$$
For this curve we have 
$E(\Que)=\langle T_2\rangle\times\langle P\rangle \iso \Zee/2\Zee\times \Zee$
with $T_2=(0,0)$ of order~$2$ and 
$P=(\frac{-13475}{36}, \frac{1249325}{216})$ 
a generator of infinite order.
We have $K_a=\Que(E[2])=\Que(\sqrt{\Delta_E})=\Que(\sqrt 5)$, and over this field
the 2-division field $\Que(\frac12P)$ of $P$ is the $V_4$ extension
$$\Que(E[2])=\Que(\sqrt{5})\subset\Que(\textstyle\frac12P)= 
\Que(\sqrt{5},\sqrt\pi, \sqrt{\bar\pi})
$$
generated by the square roots of $\pi=3+2\sqrt5$ and its conjugate.
From $\pi\bar\pi=-11$ we see that $\Que(\frac12P)$ is cyclic of degree 4 over $\Que(\sqrt{-55})$, and that
we have $K_b=\Que(\sqrt{-11})$.

As $E$ acquires a 3-torsion point $T_3=(\frac{55}{3}, \frac{275}{9}\sqrt{165})$ 
over the quadratic field $\Que(\sqrt{165})=\Que(\sqrt{-3\cdot-55})$ 
that generates a $\Que$-rational torsion subgroup of order~3,
the 3-division field of $E$ has quadratic subfields $K_d=\Que(\sqrt{165})$ and $K_g=\Que(\sqrt{-55})$, making
$K_g$ the third quadratic subfield in the $V_4$-extension $\Que\subset K_aK_b$.
Over the full 3-division field of $E$, the 3-division field of $P$ is the cubic extension
$$
\Que(E[3])=\Que(\sqrt{-3}, \sqrt{-55},\sqrt[3]{2})
\subset\Que(\textstyle\frac13P)=
\Que(E[3], \sqrt[3]{\alpha})
$$
generated by a cube root of an element $\alpha=(3+\sqrt{-55})/2\in K_g$ of norm 16, which shows that
its Galois group over $\Que$ is the group $G_3$ in \eqref{G3}.
We conclude that $P$ is a locally never-primitive point of $E(\Que)$ as the index of $\langle\bar P\rangle$
in $E(\FF_p)$ is always divisible by 2 or 3.
\end{example}
\noindent
An upcoming paper will have further details on obstructions 
to primitivity of composite level, and on how to find explicit examples.
\begin{bibdiv}
\begin{biblist}

\bib{Cooke-Weinberger}{article}{
   author={Cooke, George},
   author={Weinberger, Peter J.},
   title={On the construction of division chains in algebraic number rings,
   with applications to ${\rm SL}_{2}$},
   journal={Comm. Algebra},
   volume={3},
   date={1975},
   pages={481--524},
   issn={0092-7872},
}

\bib{Campagna-Stevenhagen}{article}{
   author={Campagna, Francesco},
   author={Stevenhagen, Peter},
   title={Cyclic reduction densities for elliptic curves},
   journal={ArXiv preprint 2001.00028, submitted},
}

\bib{Gupta-Murty-cycl}{article}{
   author={Gupta, Rajiv},
   author={Murty, M. Ram},
   title={Cyclicity and generation of points mod $p$ on elliptic curves},
   journal={Invent. Math.},
   volume={101},
   date={1990},
   number={1},
   pages={225--235},
   issn={0020-9910},
}
		
\bib{Gupta-Murty-prim}{article}{
   author={Gupta, Rajiv},
   author={Murty, M. Ram},
   title={Primitive points on elliptic curves},
   journal={Compositio Math.},
   volume={58},
   date={1986},
   number={1},
   pages={13--44},
   issn={0010-437X},
}

\bib{Hooley}{article}{
author={Hooley, Christopher},
   title={On Artin's conjecture},
   journal={J. Reine Angew. Math.},
   volume={225},
   date={1967},
   pages={209--220},
}

\bib{Husemoller}{book}{
author={Husem\"oller, Dale},
   title={Elliptic curves, 2nd edition},
   publisher={Springer Verlag, New York},
   date={2004},

}

\bib{Kulesz}{article}{
   author={Kulesz, Leopoldo},
   title={Families of elliptic curves of high rank with nontrivial torsion
   group over~$\Que$},
   journal={Acta Arith.},
   volume={108},
   date={2003},
   number={4},
   pages={339--356},
}

\bib{Lang-Trotter}{article}{
   author={Lang, Serge},
   author={Trotter, Hale},
   title={Primitive points on elliptic curves},
   journal={Bull. Amer. Math. Soc.},
   volume={83},
   date={1977},
   number={2},
   pages={289--292},
}

\bib{Lenstra}{article}{
   author={Lenstra, H. W., Jr.},
   title={On Artin's conjecture and Euclid's algorithm in global fields},
   journal={Invent. Math.},
   volume={42},
   date={1977},
   pages={201--224},
   issn={0020-9910},
}

\bib{LMS}{article}{
   author={Lenstra, H. W., Jr.},
   author={Stevenhagen, P.},
   author={Moree, P.},
   title={Character sums for primitive root densities},
   journal={Math. Proc. Cambridge Philos. Soc.},
   volume={157},
   date={2014},
   number={3},
   pages={489--511},
   issn={0305-0041},
}

\bib{Meleleo}{article}{
   author={Meleleo, Giulio},
   title={Questions related to Primitive Points on Elliptic Curves
          and Statistics for Biquadratic Curves over Finite Fields},
   journal={Tesi di Dottorato, Universit\`a Roma Tre},
   date={2015},
}

\bib{Merel}{article}{
   author={Merel, Lo\"\i c},
   title={Bornes pour la torsion des courbes elliptiques sur les corps de
   nombres},
   language={French},
   journal={Invent. Math.},
   volume={124},
   date={1996},
   number={1-3},
   pages={437--449},
   issn={0020-9910},
}

\bib{Serre}{article}{
   author={Serre, Jean-Pierre},
   title={Propri\'et\'es galoisiennes des points d'ordre fini des courbes
   elliptiques},
   journal={Invent. Math.},
   volume={15},
   date={1972},
   number={4},
   pages={259--331},
   issn={0020-9910},
}
\bib{Serre2}{book}{
   author={Serre, Jean-Pierre},
   title={Abelian $\ell$-adic representations and elliptic curves},
   publisher={Benjamin, New York},
   date={1968},
}

\bib{Serre3}{collection}{
   author={Serre, Jean-Pierre},
   title={R\'esum\'e des cours de 1977-1978},
   book={Oeuvres/Collected papers. III. 1972--1984},
   series={Springer Collected Works in Mathematics},
   note={Reprint of the 2003 edition [of the 1986 original MR0926691]},
   publisher={Springer, Heidelberg},
   date={2013},
   pages={vi+731},
   isbn={978-3-642-39837-7},
}

\bib{Stev-Lenstra}{article}{
author={Stevenhagen, Peter},
author={Lenstra, Hendrik},
    title={Chebotarev and his density theorem},
    journal={The Mathematical Intelligencer},
    volume={18},
    number={2},
    date={1996},
    pages={26--37}
}

\bib{Stevenhagen}{article}{
author={Stevenhagen, Peter},
   title={The correction factor in Artin's primitive root conjecture},
   language={English, with English and French summaries},
   note={Les XXII\`emes Journ\'ees Arithmetiques (Lille, 2001)},
   journal={J. Th\'eor. Nombres Bordeaux},
   volume={15},
   date={2003},
   number={1},
   pages={383--391},
}

\bib{Velu}{article}{
   author={V\'{e}lu, Jacques},
   title={Isog\'{e}nies entre courbes elliptiques},
   language={French},
   journal={C. R. Acad. Sci. Paris S\'{e}r. A-B},
   volume={273},
   date={1971},
   pages={A238--A241},
}   

\end{biblist}
\end{bibdiv}
\end{document}